\documentclass[11pt]{article}
\usepackage{indentfirst}
\usepackage{mathrsfs}
\usepackage{latexsym,lineno}
\usepackage{algorithm}
\usepackage{enumerate}
\usepackage{algorithmic}
\usepackage{epsfig}
\usepackage{color}
\usepackage{amsmath}\usepackage{fleqn}\usepackage{verbatim}
\usepackage{epsf}
\usepackage{amsthm}\usepackage{graphicx, float}\usepackage{graphicx}
\usepackage{amsfonts}
\usepackage{amssymb}\usepackage{graphpap}
\usepackage{epic}\usepackage{curves}
\usepackage{tikz}
\usepackage{subfigure}
\usepackage{mathrsfs}
\usepackage{pstricks}
\usepackage{hyperref}

\topmargin 0in \setlength{\oddsidemargin}{0.in} \textwidth=6.25in
\textheight=8.5in \evensidemargin=0in \oddsidemargin=0.14in
\topmargin=0in \topskip=0pt \baselineskip=12pt
\parskip=4pt
\parindent=1em

\newtheorem{claim}{Claim}

\newtheorem{theorem}{Theorem}[section]

\newtheorem{lemma}{Lemma}[section]

\numberwithin{equation}{section}

\title{\bf Extremal problems for star forests and cliques\thanks{Research was partially supported by the National
Nature Science Foundation of China (grant numbers 12331012)}}
\date{}

\author {  Yongchun Lu$^{1}$,\, Yongchun Lu, \,  Liying Kang\thanks{\em Corresponding author. Email address: lykang@shu.edu.cn (L. Kang), luyongchun@shu.edu.cn (Y. Lu), lyc328az@163.com (Y. Liu)}\\
{\small $^{1}$ Department of Mathematics, Shanghai University,
Shanghai 200444, P.R. China}}

\author {Yongchun Lu$^{1}$\, Yichong Liu$^{1}$ \,    Liying Kang$^{1,2}$\thanks{\em Corresponding author.  Email addresses: lykang@shu.edu.cn (L. Kang),  luyongchun@shu.edu.cn (Y. Lu), lyc328az@163.com (Y. Liu)} \\
{\small $^{1}$Department of Mathematics, Shanghai University,
Shanghai,  China, 200444}\\
{\small$^{2}$Newtouch Center for Mathematics of Shanghai University,
Shanghai,  China, 200444}}

\begin{document}

\maketitle

\begin{abstract}
    Given  a family of graphs $\mathcal{F}$, the  Tur\'{a}n number $ex(n, \mathcal{F})$ denotes the maximum number of edges in any $\mathcal{F}$-free graph on $n$ vertices. Recently, Alon and Frankl  studied of maximum number of edges in an $n$-vertex $\{K_{k+1}, M_{s+1}\}$-free graph, where $K_{k+1}$ is a complete graph on  $k+1$ vertices and $M_{s+1}$ is a matching of $s+1$ edges. They
    determined the exact value of $ex(n, \{K_{k+1},M_{s+1}\})$. In this paper, we extend  the matching $M_{s+1}$
    to  star forest $(s+1)S_l$,  and determine the exact value of $ex(n, \{K_{k+1},(s+1)S_l\})$ for sufficiently    large enough $n$. Furthermore, all the  extremal graphs are obtained.
    
    \bigskip \noindent{\bf Keywords:}\ \ Tur\'an number; Star forest; Clique
    \medskip

\noindent{\bf AMS (2000) subject classification:}\ \  05C35
\end{abstract}

\section{Introduction}
Let $G=(V,E)$ be a simple graph with vertex set $V=V(G)$ and edge set $E=E(G)$. Set $e(G)=|E(G)|$. If $u,v\in V(G)$ and $uv\in E(G)$, we say $u$ and $v$ are {\sl adjacent}.
Denote by $G[S]$ the graph induced by $S$, and denote by $G\setminus S$ the graph obtained from $G$ by deleting all vertices of $S$ and all edges incident with $S$. For $V_1,V_2\subseteq V(G)$, $E(V_1,V_2)$ denotes the set of edges between $V_1$ and $V_2$ in $G$, and $e(V_1,V_2)=|E(V_1,V_2)|$.

 Given  a family of graphs $\mathcal{F}$, a graph $G$ is called $\mathcal{F}$-free if $G$ does not contain any copy of a graph in $\mathcal{F}$ as a subgraph. The {\sl  Tur\'{a}n number} of a family of graphs  $\mathcal{F}$, denoted as ex$(n, \mathcal{F})$, is the maximum number of edges in an $\mathcal{F}$-free graph $G$ of order $n$. Denote by EX$(n, \mathcal{F})$ the set of $\mathcal{F}$-free graphs with $ex(n, \mathcal{F})$ edges and call a graph in EX$(n, \mathcal{F})$
 {\sl an extremal graph} for $\mathcal{F}$.

Let $K_r$ denote the complete graph on $r$ vertices. A complete subgraph is called a clique. The $r$-vertex independent set is denoted by $\overline{K_r}$. Let $M_s$ denote the matching of $s$ edges, and $S_l$ denote the star on $l+1$ vertices. The Tur\'{a}n graph $T_{k}(n)$ is a complete multipartite graph formed by partitioning the set of $n$ vertices into $k$ subsets with size as equal as possible, and connecting two vertices by an edge if and only if they belong to different subsets. For two vertex disjoint graphs $G$ and $H$, we write $G\cup H$ for the union of $G$ and $H$. The {\sl join} of $G$ and $H$, denoted by $G\vee H$, is the graph obtained by joining every vertex of $G$ to every vertex of $H$.
 We write $k$ disjoint copies of $H$ as $kH$.

Erd\H{o}s and Gallai  \cite{1959-Erdos-Gallai} determined $ex(n, M_{s+1})$ for all positive integers $k$ and $n$.
Lidick\'{y}  et al. \cite{2013-Liu-p62} investigated the Tur\'{a}n number of a star forest, and determined the Tur\'{a}n number of a star forest for sufficiently large $n$. Lan et al. \cite{2019-Lan} gave $ex(n, (s+1)S_l)$ for $k,l\geq 1$ and $n$ is sufficiently large. Li et al. \cite{2022-Li-p112653}  determined $ex(n, (s+1)S_l)$ for all positive integers $k,l$ and $n$.
Recently, Alon and Frankl \cite{2022-Alon-arXiv:2210.15076} determined the exact value of $ex(n,\{K_{k+1},M_{s+1}\})$.

\begin{theorem}(\cite{2022-Alon-arXiv:2210.15076})
   For $n\geq 2s+1$ and $k\geq 2$, $ex(n,\{K_{k+1},M_{s+1}\})=\max\{e(T_k(2s+1)),e(G(n,k))\}$ where $G(n,k)=T_{k-1}(s)\vee \overline{K_{n-s}}$.
\end{theorem}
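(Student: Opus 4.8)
The plan is to obtain the lower bound from the two natural constructions and the upper bound from a maximum-matching analysis constrained by the clique condition. Throughout, let $\nu(G)$ denote the matching number of $G$. For the lower bound, both $H_1:=T_k(2s+1)\cup\overline{K_{n-2s-1}}$ and $H_2:=G(n,k)=T_{k-1}(s)\vee\overline{K_{n-s}}$ are $K_{k+1}$-free (each is $k$-partite), $H_1$ has at most $2s+1$ non-isolated vertices so $\nu(H_1)\le s$, and every edge of $H_2$ meets the $s$-set $V(T_{k-1}(s))$ so $\nu(H_2)\le s$; thus $ex(n,\{K_{k+1},M_{s+1}\})\ge\max\{e(H_1),e(H_2)\}$. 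For the upper bound I would induct on $s$: if an extremal $G$ has $\nu(G)=t<s$ then it is $\{K_{k+1},M_{t+1}\}$-free, so by the induction hypothesis $e(G)\le\max\{e(T_k(2t+1)),\,e(T_{k-1}(t))+t(n-t)\}$, and since $n\ge 2s+1$ one checks that $e(T_k(2t+1))\le e(T_k(2s+1))$ and that $t\mapsto e(T_{k-1}(t))+t(n-t)$ is increasing on $\{0,1,\dots,s\}$, so $e(G)\le\max\{e(H_1),e(H_2)\}$. Hence it suffices to treat an extremal $G$ with $\nu(G)=s$.

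Fix a maximum matching $M=\{u_1v_1,\dots,u_sv_s\}$ of $G$, put $U=\bigcup_{i}\{u_i,v_i\}$ and $W=V(G)\setminus U$; then $W$ is independent, for otherwise $M$ extends. The key swapping observation is that for each $i$ there are no distinct $w,w'\in W$ with $wu_i\in E(G)$ and $w'v_i\in E(G)$, since otherwise $M-u_iv_i+wu_i+w'v_i$ is a larger matching. Consequently, for each $i$, either one of $u_i,v_i$ has no neighbour in $W$ — call the other endpoint $x_i$ and set $X=\{x_i\}$, $|X|\le s$ — or $N(u_i)\cap W=N(v_i)\cap W$ consists of a single vertex. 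Writing $e(G)=e(G[U])+e(U,W)$, it follows that $e(U,W)\le\sum_{x\in X}|N(x)\cap W|+2s$, so, up to $O(s)$ edges, every edge into $W$ starts in $X$.

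Now I would bring in $K_{k+1}$-freeness. Since $W$ is independent, $N(w)\subseteq U$ induces a $K_k$-free graph for every $w\in W$; in particular a clique of $G[X]$ all of whose vertices have a common neighbour in $W$ has at most $k-1$ vertices. Moreover, if $x_i\sim x_j$ for some $i\ne j$ with neighbours $w_i\in N(x_i)\cap W$ and $w_j\in N(x_j)\cap W$, $w_i\ne w_j$, and if the corresponding $M$-partners satisfy $y_i\sim y_j$, then $\{w_ix_i,\,w_jx_j,\,y_iy_j\}$ together with the remaining $s-2$ edges of $M$ forms a matching of size $s+1$ — impossible; hence the set of $M$-partners of $X$ is independent. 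When $|W|$ is large — which, in view of $e(H_2)=e(T_{k-1}(s))+s(n-s)$, is the only situation in which $e(G)$ could exceed $\max\{e(H_1),e(H_2)\}$ — these constraints force $|X|=s$ and every $x\in X$ adjacent to all but $O(1)$ vertices of $W$, so any two members of $X$ share a common neighbour in $W$, whence $G[X]$ is $K_k$-free; combining with $e(G[X])\le e(T_{k-1}(s))$, the independence of the partner set, and $\sum_{x\in X}|N(x)\cap W|\le s(n-2s)$ yields $e(G)\le e(T_{k-1}(s))+s^2+s(n-2s)=e(G(n,k))$. In the complementary range $G$ is essentially supported on at most $2s+1$ vertices and $e(G)\le e(T_k(2s+1))$. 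Tracking when equality can hold throughout then identifies the extremal graphs as precisely $H_1$ and $H_2$.

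The step I expect to be the main obstacle is this final case analysis: one must simultaneously control the edges inside the ``core'' $U$ and the edges from $U$ to $W$ under the clique condition and the matching-augmentation rigidity, showing that the $W$-active vertices span only a $K_k$-free graph while their $M$-partners span no edge, and one must make the resulting inequality \emph{exact} for every $n\ge 2s+1$ — in particular in the small-$n$ range where the balanced Turán graph $T_k(2s+1)$, and not $G(n,k)$, is the extremal configuration.
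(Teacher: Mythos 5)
This statement is quoted by the paper from Alon and Frankl (reference [1]); the paper contains no proof of it, so there is nothing internal to compare your argument against, and I have to judge the proposal on its own. Your lower bound is correct and complete, and the reduction to the case $\nu(G)=s$ together with the maximum-matching analysis ($W$ independent, the swapping observation, the set $X$ of $W$-active endpoints, the independence of the set of $M$-partners) is a sound skeleton. But the decisive step is missing, and the assertions you use to bridge it are not consequences of what you have established.

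Concretely: (i) The claim that ``these constraints force $|X|=s$ and every $x\in X$ adjacent to all but $O(1)$ vertices of $W$'' is not proved and does not follow from extremality before the upper bound itself is known; without it you cannot conclude that any $k$ vertices of $X$ have a common neighbour in $W$, hence you cannot conclude $G[X]$ is $K_k$-free. This matters because the only a priori bound on the core is $e(G[U])\le e(T_k(2s))\approx (1-\tfrac1k)2s^2$, which for $k\ge 3$ strictly exceeds the $e(T_{k-1}(s))+s^2$ you need; so the inequality $e(G)\le e(G(n,k))$ genuinely depends on the unproved structural claim. There is also an unaccounted additive $2s$ from the ``type 2'' pairs that your final display silently drops, whereas the theorem is exact. (ii) The complementary range is dismissed with ``$G$ is essentially supported on at most $2s+1$ vertices,'' which is false in general: a $\{K_{k+1},M_{s+1}\}$-free graph with $\nu(G)=s$ can have arbitrarily many non-isolated vertices of $W$ attached to $X$, and for $n$ slightly above $2s+1$ one must show that the trade-off between a dense core ($e(G[U])$ close to $e(T_k(2s))$) and a sparse attachment to $W$ never beats $\max\{e(T_k(2s+1)),e(G(n,k))\}$. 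This small-$n$ regime, where $T_k(2s+1)$ rather than $G(n,k)$ is extremal, is precisely the hard part of the Alon--Frankl theorem, and your proposal does not contain an argument for it. You correctly identify this case analysis as the main obstacle; as written, it is a genuine gap rather than a routine verification.
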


Katona and Xiao \cite{2023Katona} determined ex$(n, \{K_{k+1}, P_l\})$ if $l>2k+1$.  For a graph $H$ with $\chi(H)>2$,
Gerbner \cite{{2022-Gerbner-arXiv:2211.03272}} determined ex$(n, \{H, M_{k+1}\})$ apart from a constant additive term. Liu and Kang \cite{2024Liu} obtained asymptotical result for ex$(n, \{H, P_l\})$.

In this paper, we  extend $M_{s+1}$ to the star forest $(s+1)S_l$, we determine the exact value of $ex(n,\{K_{k+1},(s+1)S_l\})$. We will
show the following lemma which claims that
there exists a $K_3$-free extremal graph for $S_{l+1}$ on $n$ vertices.
  \begin{lemma} \label{regu}
   Let $l\geq 1$ and $n\geq l^2+2$.

   (1) If $n$ is even,  there exists an  $l$-regular bipartite graph $\mathcal{R}$ on $n$ vertices whose two parts  $V_1, V_2$ have size $\frac{n}{2}$.

   (2) If $n$ is odd, there exists a $K_3$-free $l$-regular graph or a $K_3$-free almost $l$-regular graph $\mathcal{R}$ on $n$ vertices, and there is a partition $V(\mathcal{R})=V_1\cup V_2$ and a vertex $v_0\in V_2$ such that $|V_1|=\lfloor\frac{n}{2}\rfloor, |V_2|=\lceil\frac{n}{2}\rceil$ and $\mathcal{R}[V\setminus v_0]$ is a bipartite graph.
\end{lemma}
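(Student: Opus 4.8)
The plan is to produce explicit graphs in each case.

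\textbf{Part (1).} Write $n=2m$; the hypothesis $n\ge l^2+2$ gives $m\ge l$. I would take $V_1=\{a_0,\dots,a_{m-1}\}$, $V_2=\{b_0,\dots,b_{m-1}\}$ and join $a_i$ to $b_j$ exactly when $j-i\equiv 0,1,\dots,l-1\pmod m$. Then $a_i$ has the $l$ neighbours $b_i,b_{i+1},\dots,b_{i+l-1}$ and $b_j$ the $l$ neighbours $a_j,a_{j-1},\dots,a_{j-l+1}$ (indices mod $m$), which are distinct because $l\le m$; so this graph is $l$-regular, and it is bipartite with parts of size $m=n/2$ by construction. This settles (1).

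\textbf{Part (2).} Write $n=2m+1$, so $m\ge (l^2+1)/2$. I would start from the $l$-regular bipartite graph $B$ on $V_1=\{a_0,\dots,a_{m-1}\}$, $W=\{b_0,\dots,b_{m-1}\}$ defined exactly as in Part (1) (legitimate since $m\ge l$), and then attach one extra vertex. Put $p=\lfloor l/2\rfloor$ and
\[
A=\{a_{il}:0\le i\le p-1\},\qquad B'=\{b_{il}:0\le i\le p-1\}.
\]
Because $m\ge (l^2+1)/2$ we have $(p-1)l\le m-1$, so $A\subseteq V_1$ and $B'\subseteq W$; moreover a short modular computation shows that in $B$ the vertex $a_{il}$ is adjacent to $b_{jl}$ only when $i=j$ (the relation $a_{il}\sim b_{jl}$ means $(j-i)l\bmod m\in\{0,\dots,l-1\}$, and with $|j-i|\le p-1$ and $m>\tfrac12 l^2-1$ this forces $i=j$). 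Hence $M:=\{a_{il}b_{il}:0\le i\le p-1\}$ is a matching in $B$ and is the \emph{only} set of edges of $B$ joining $A$ to $B'$.

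Now I would form $\mathcal R$ from $B$ by adding a new vertex $v_0$ adjacent to every vertex of $A\cup B'$ and deleting the edges of $M$, and set $V_2=W\cup\{v_0\}$, keeping $V_1$. Each vertex of $A\cup B'$ trades its $M$-edge for an edge to $v_0$, so every vertex other than $v_0$ still has degree $l$, while $\deg(v_0)=2p$; thus $\mathcal R$ is $l$-regular when $l$ is even ($2p=l$), and when $l$ is odd it has all degrees $l$ except $v_0$, of degree $l-1$, i.e.\ it is almost $l$-regular. Also $|V_1|=m=\lfloor n/2\rfloor$, $|V_2|=m+1=\lceil n/2\rceil$, $v_0\in V_2$, and $\mathcal R[V\setminus v_0]=B-M$ is a subgraph of the bipartite graph $B$, hence bipartite. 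Finally $\mathcal R$ is $K_3$-free: a triangle cannot lie in the bipartite graph $\mathcal R-v_0$, so it would contain $v_0$ together with two adjacent vertices of $N(v_0)=A\cup B'$; but $A$ and $B'$ are independent in $B$, and the only $B$-edges between them, namely $M$, have been deleted, so $N(v_0)$ is independent in $\mathcal R-v_0$.

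\textbf{Expected main obstacle.} The only step with genuine content is the claim that $B$ has no edge between $A$ and $B'$ beyond $M$; this is precisely where the hypothesis $n\ge l^2+2$ (which forces $m$ to exceed roughly $\tfrac12 l^2$) is needed, and it is a routine but slightly fiddly congruence estimate. Everything else is degree bookkeeping together with the observation that adjoining a vertex with independent neighbourhood to a bipartite graph preserves triangle-freeness.
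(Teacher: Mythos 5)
Your proof is correct and follows essentially the same strategy as the paper: construct a balanced $l$-regular bipartite graph, and for odd $n$ delete a suitable matching and join its endpoints to the new vertex $v_0$, verifying that $N(v_0)$ remains independent so that no triangle is created. The only real difference is cosmetic: you realize the base bipartite graph as a circulant (with an explicit congruence check that the chosen matching is induced) rather than by the paper's block decomposition followed by degree-patching edge swaps, which makes Part (1) somewhat more explicit but does not change the underlying idea.
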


The partition $V(\mathcal{R})= V_1\cup V_2$ in Lemma \ref{regu} is called a good partition of $\mathcal{R}$ and $v_0$ is called  an exceptional vertex of $\mathcal{R}$ when $n$ is odd.
We  construct two families of graphs $\mathcal{G}_1(s)$ and $\mathcal{G}_2(s)$ as follows.
 Let $\mathcal{R}$ be an $(n-s)$-vertex $(l-1)$-regular graph or almost $(l-1)$-regular graph
 in Lemma \ref{regu}, $V_1, V_2$ be a good-partition of $\mathcal{R}$ and $v_0$ be the exceptional vertex of $\mathcal{R}$ when $n-s$ is odd.
   A graph in $\mathcal{G}_1(s)$ is obtained from $T_2(s)$ and $\mathcal{R}$ by joining every vertex in the larger part of $T_2(s)$ to every vertex in $V_1$ and joining every vertex in the other  part of $T_2(s)$ to every vertex in $V_2$ $ (V_2\setminus \{v_0\})$ when $n-s$ is even (odd).
  Let $\mathcal{R}_1$ be a bipartite graph on $n-s$ vertices with bipartition $S, T$, where $|S|=\lceil \frac{n-s}{2}\rceil$ and $|T|=\lfloor\frac{n-s}{2}\rfloor$, and  the degree of each vertex in $T$ is $l-1$ and the degree of each vertex in $S$ is not exceeding $l-1$. It is easily seen that graph $\mathcal{R}_1$ always exists from the proof of Lemma \ref{regu}. A graph in  $\mathcal{G}_2(s)$ is obtained from $T_2(s)$ and $\mathcal{R}_1$ by joining  every vertex in the larger part of $T_2(s)$ to every vertex in $S$ and joining every vertex in the other  part of $T_2(s)$ to every vertex in $T$.

Our main results are the following.
\begin{theorem} \label{main_1}
   Let $k\geq 3,  l\geq 2$, $s\geq 0$ be integers and $n\geq ks^2+(s+1)(l+1)^2$. Then $$ex(n,\{K_{k+1},(s+1)S_l\})=e(T_{k-2}(s))+s(n-s)+\lfloor \frac{(l-1)(n-s)}{2}\rfloor.$$ The graphs $T_{k-2}(s)\vee \mathcal{R}$ are  the extremal graphs for $\{K_{k+1},(s+1)S_l\}$, where $\mathcal{R}$ is a $K_3$-free extremal graph for $S_l$ on $n-s$ vertices.
\end{theorem}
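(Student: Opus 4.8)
The plan is to prove the edge bound by induction on $s$ and read off the extremal configuration from the equality case. The base case $s=0$ is immediate: a $\{K_{k+1},S_l\}$-free graph has maximum degree at most $l-1$, hence at most $\lfloor (l-1)n/2\rfloor$ edges, which is attained by the graph $\mathcal R$ of Lemma~\ref{regu}. For the lower bound in general, I take $\mathcal R$ to be the $K_3$-free $S_l$-extremal graph on $n-s$ vertices supplied by Lemma~\ref{regu}; then $G^{*}:=T_{k-2}(s)\vee\mathcal R$ has $e(T_{k-2}(s))+s(n-s)+\lfloor (l-1)(n-s)/2\rfloor$ edges, it is $K_{k+1}$-free since $\omega(G^{*})\le(k-2)+2=k$, and it is $(s+1)S_l$-free since any copy of $S_l$ in $G^{*}$ must meet the $s$-set $V(T_{k-2}(s))$ — a star centred in $\mathcal R$ has at most $l-1$ neighbours inside $\mathcal R$ and so needs a leaf in the $s$-set — whence $G^{*}$ has at most $s$ pairwise disjoint copies of $S_l$. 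So what remains is the matching upper bound.

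Let $G$ be a $\{K_{k+1},(s+1)S_l\}$-free graph on $n$ vertices with $e(G)$ maximum, so $e(G)\ge e(T_{k-2}(s))+s(n-s)+\lfloor (l-1)(n-s)/2\rfloor$. I set $D:=(s+1)(l+1)$, $X:=\{v:d_G(v)\ge D\}$, $Y:=V(G)\setminus X$, and $q:=|X|$. A short greedy argument gives $q\le s$: from any $s+1$ vertices of $X$ one grows $s+1$ pairwise disjoint copies of $S_l$ centred at them, because when the $i$-th star is built at most $(s+1)+sl<D-l$ vertices are excluded, so its centre still has at least $l$ available neighbours, contradicting $(s+1)S_l$-freeness. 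The same greedy extension — appending $q$ stars centred in $X$ to a star packing inside $Y$ — shows $G[Y]$ is $(s+1-q)S_l$-free, and every vertex of $Y$ has degree at most $D-1$ in $G$, so $\Delta(G[Y])\le D-1$.

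Suppose first $q=s$. Then $G[Y]$ is $S_l$-free, so $\Delta(G[Y])\le l-1$ and $e(G[Y])\le\lfloor (l-1)(n-s)/2\rfloor$, and trivially $e(X,Y)\le s(n-s)$; the point is to show $e(G[X])\le e(T_{k-2}(s))$. If $s\le k-2$ this holds since $T_{k-2}(s)=K_s$. If $s\ge k-1$ and $G[X]$ contained a clique $X'$ on $k-1$ vertices, then since $G$ is $K_{k+1}$-free, for every edge $yy'$ of $G[Y]$ some vertex of $X'$ is non-adjacent to $y$ or to $y'$; as each $y\in Y$ has at most $l-1$ neighbours in $Y$, one missing pair of $X'\times Y$ accounts for at most $l-1$ such edges, so $e(X,Y)\le s(n-s)-e(G[Y])/(l-1)$. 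Feeding this into $e(G)=e(G[X])+e(X,Y)+e(G[Y])$ with $e(G[X])\le\binom{s}{2}$ and the lower bound on $e(G)$ forces $n-s\le s^{2}-s+1$, impossible for $n$ in our range. Hence $e(G[X])\le e(T_{k-2}(s))$, so $e(G)\le e(T_{k-2}(s))+s(n-s)+\lfloor (l-1)(n-s)/2\rfloor$ and equality holds throughout: $G[X]$ is a $K_{k-1}$-free graph on $s$ vertices with $e(T_{k-2}(s))$ edges, hence $G[X]=T_{k-2}(s)$ by Tur\'an's theorem; $X$ is completely joined to $Y$; and $G[Y]$ is an $S_l$-extremal graph on $n-s$ vertices, forced to be triangle-free when $s\ge k-2$ because $G[X]\supseteq K_{k-2}$ is joined to all of $Y$. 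This is precisely $G=T_{k-2}(s)\vee\mathcal R$.

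It remains to treat $q<s$, which is where the bulk of the work lies. Now $G[Y]$ is a $\{K_{k+1},(s-q+1)S_l\}$-free graph on $n-q$ vertices, so by the induction hypothesis $e(G[Y])\le e(T_{k-2}(s-q))+(s-q)(n-s)+\lfloor (l-1)(n-s)/2\rfloor$; together with $e(G[X])\le\binom{q}{2}$, $e(X,Y)\le q(n-q)$, and the identity $\binom{q}{2}+q(s-q)+\binom{s-q}{2}=\binom{s}{2}$, this already yields $e(G)\le e(T_{k-2}(s))+s(n-s)+\lfloor (l-1)(n-s)/2\rfloor$ when $s\le k-2$, with the structure propagating down the recursion. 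When $s\ge k-1$ the crude bound $e(G[X])\le\binom{q}{2}$ is too weak, and one must instead exploit that $G$ is globally $K_{k+1}$-free to couple the clique number of $G[X]$ with the clique number permitted inside $G[Y]$: if $G[X]$ contains a $c$-clique $X'$ then $G[Y]$ restricted to the common neighbourhood $N_Y(X')$ is $K_{k+1-c}$-free, while outside $N_Y(X')$ many $X'$–$Y$ join-edges are lost exactly as in the case $q=s$. The hard part is to make this trade-off quantitative — balancing the $\Theta(n)$-order loss of join-edges against the gain from a larger clique in $G[X]$, invoking $e(G[Y])\le ex(n-q,\{K_{k+1-c},(s-q+1)S_l\})$ and the earlier cases, and showing the optimum is governed precisely by the split $(k-2)+2$ — so that for $n\ge ks^{2}+(s+1)(l+1)^{2}$ one obtains $e(G)<e(T_{k-2}(s))+s(n-s)+\lfloor (l-1)(n-s)/2\rfloor$ strictly; hence the extremal $G$ has $q=s$ and the structure found above. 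Making this optimisation precise, and checking that the stated bound on $n$ suffices for it, is the main obstacle.
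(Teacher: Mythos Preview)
Your proposal has a genuine gap, which you yourself flag: the case $q<s$ (for $s\ge k-1$) is left as ``the main obstacle'' with only a sketch of the intended trade-off. As written, nothing prevents an extremal $G$ from having fewer than $s$ vertices of degree $\ge D$, and your outlined coupling between $\omega(G[X])$ and $ex(n-q,\{K_{k+1-c},(s-q+1)S_l\})$ is not carried out. This is not a detail---it is the heart of the argument, and the paper handles it by a device you do not use.

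The paper's key move is to \emph{force} the hub set to have size exactly $s$, thereby eliminating your case split. Instead of taking $X$ to be \emph{all} vertices of high degree, the paper first uses the induction hypothesis at $s-1$ to show that $e(G)>ex(n,\{K_{k+1},sS_l\})$, so $G$ contains a fixed copy $T_0$ of $sS_l$; then, by deleting any one star $K$ of $T_0$ and applying induction to $G\setminus V(K)$, it shows that \emph{each} star of $T_0$ carries a vertex of degree at least $(n-s)/(l+1)$. These $s$ specific vertices form $U$, and one checks directly that $G\setminus U$ is $S_l$-free (any extra $S_l$ could be completed to $(s+1)S_l$ using the large degrees in $U$). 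So $|U|=s$ always, and your problematic case never arises. From there the paper diverges further from your approach: it applies Zykov-type symmetrization inside $U$ to reduce to $G[U]$ complete $m$-partite, and then rules out $m\le k-3$, $m=k-1$, and $m=k$ by separate edge counts, rather than your double-counting of missing $X'\times Y$ pairs against edges of $G[Y]$.

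Your $q=s$ argument is in fact a nice alternative to the paper's symmetrization-plus-case-analysis once one knows $|U|=s$, but to make the proof complete you need the paper's Claim~1 (or an equivalent) to guarantee $q=s$ in the first place. A minor aside: your greedy count ``$(s+1)+sl<D-l$'' is off by one (it should be $s+sl\le D-l$, which does hold with $D=(s+1)(l+1)$), but this is easily fixed.
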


\begin{theorem} \label{main_1remain}
Let $k=2$ and $n$ is large enough.

\noindent
(1) If $l<s+1$, then $$\operatorname{ex}(n,\{K_{k+1},(s+1)S_{l}\})=s(n-s),$$ and $K_{s, n-s}$ is the unique extremal graph.

\noindent
(2) If $l\geq s+1$,  $n-s$ is even, then
 $$\operatorname{ex}(n,\{K_{k+1},(s+1)S_l\})= \lceil\frac{s}{2}\rceil\ \lfloor\frac{s}{2}\rfloor+s\lfloor \frac{n-s}{2}\rfloor+\lfloor \frac{(l-1)(n-s)}{2}\rfloor,$$ and $EX(n, \{K_{k+1},(s+1)S_l\})=\mathcal{G}_1(s)$.

 \noindent
  If $l\geq s+1$, $n-s$ is odd, then
  \begin{eqnarray*}
  \operatorname{ex}(n,\{K_{k+1},(s+1)S_l\})&=& \max\{\lceil\frac{s}{2}\rceil\ \lfloor\frac{s}{2}\rfloor+s\lfloor \frac{n-s}{2}\rfloor+\lfloor \frac{(l-1)(n-s)}{2}\rfloor,\\
  & &\lceil\frac{s}{2}\rceil\ \lfloor\frac{s}{2}\rfloor+s\lfloor \frac{n-s}{2}\rfloor+(l-1)\lfloor \frac{n-s}{2}\rfloor\ +\lceil\frac{s}{2}\rceil\},
 \end{eqnarray*}
 any extremal graph for $\{K_{k+1},(s+1)S_l\}$ is in $\mathcal{G}_1(s)$ or in $\mathcal{G}_2(s)$.
\end{theorem}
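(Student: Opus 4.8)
The plan is to treat the case $k=2$, where an $\{K_3,(s+1)S_l\}$-free graph is triangle-free, so the whole graph (not merely a ``leftover'' part) is bipartite-like, and the star-forest constraint must be exploited more delicately than in Theorem \ref{main_1}.

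Let me sketch the approach. Let $G$ be an extremal $\{K_3,(s+1)S_l\}$-free graph on $n$ vertices. First I would record the lower bounds: the constructions $K_{s,n-s}$, the family $\mathcal{G}_1(s)$, and the family $\mathcal{G}_2(s)$ are all triangle-free (each is bipartite or a blow-up of a bipartite pattern), and one checks directly that none contains $(s+1)$ vertex-disjoint copies of $S_l$ — in $K_{s,n-s}$ because deleting the $s$ small-side vertices destroys every $S_l$ when $l\ge s+1$ is false, i.e. when $l<s+1$ a star $S_l$ needs a center of degree $l$, and any such center lies in the $s$-set, so $s$ stars exhaust it; in $\mathcal{G}_i(s)$ because the $(l-1)$-regular (or almost-regular) part $\mathcal{R}$ has maximum degree $l-1$ inside itself, so every $S_l$ must use one of the $s$ vertices of $T_2(s)$ as its center or as a leaf absorbing the extra degree, and a counting argument caps the number of disjoint stars at $s$. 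Computing edge counts of these constructions gives the three displayed numbers, so $\operatorname{ex}(n,\{K_3,(s+1)S_l\})$ is at least the stated maximum.

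For the upper bound, the engine is the following dichotomy. Since $G$ is triangle-free, $e(G)\le n^2/4$; combined with $(s+1)S_l$-freeness I would find a small set $W\subseteq V(G)$, $|W|\le s$, meeting every copy of $S_l$ — this is the analogue of the vertex-cover-type reduction used for matchings in \cite{2022-Alon-arXiv:2210.15076} and for star forests in \cite{2019-Lan,2022-Li-p112653}: if no such $W$ existed one could greedily extract $s+1$ disjoint stars (here one uses $n$ large and $\Delta(G)\ge l$ somewhere, else $e(G)\le ln/2$ which is dominated by the constructions). Having fixed such a $W$, write $H=G\setminus W$; then $H$ is triangle-free and $S_l$-free, so by the Lidick\'y–Liu–Pikhurko / Lan–Shi–Song type result $e(H)\le\lfloor (l-1)|V(H)|/2\rfloor$ with the extremal $H$ being (almost) $(l-1)$-regular and triangle-free, hence bipartite or near-bipartite by Lemma \ref{regu}. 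It remains to bound $e(W,V(H))+e(G[W])$: triangle-freeness forces $G[W]$ to be bipartite and, crucially, forces a ``no-triangle across'' condition — if $w_1w_2\in E(G[W])$ then $N(w_1)\cap N(w_2)\cap V(H)=\emptyset$. Optimizing $e(G[W])+e(W,V(H))$ subject to this, together with the bipartition of $H$, is exactly the optimization whose solutions are $K_{s,n-s}$ (when $l<s+1$, so $H$ contributes nothing of value) versus the $T_2(s)$-plus-$\mathcal{R}$ patterns $\mathcal{G}_1,\mathcal{G}_2$ (when $l\ge s+1$). The parity split for $n-s$ odd comes from whether the odd vertex of the near-regular $\mathcal{R}$ is ``paid for'' by an extra edge into $W$ (giving the $\mathcal{G}_2$ count) or absorbed inside $\mathcal{R}$ (the $\mathcal{G}_1$ count).

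The main obstacle I anticipate is the last step: proving that the transversal $W$ can be taken with $|W|\le s$ AND that, for the \emph{extremal} $G$, the pair $(G[W]\cup E(W,V(H)), H)$ genuinely realizes the optimum of the combined quadratic-in-$|W|$ plus linear-in-$|V(H)|$ objective — in particular ruling out intermediate configurations where $G[W]$ is a non-complete-bipartite triangle-free graph gaining extra internal edges at the cost of fewer edges to $H$, and handling the boundary $l=s+1$ and the $n-s$ odd parity carefully so that exactly the families $\mathcal{G}_1(s)$ and $\mathcal{G}_2(s)$ (and no others) survive as extremal. This is where the ``$n$ large enough'' hypothesis is spent: it guarantees the $s(n-s)$-type terms dominate all $O(s^2)$ corrections, pinning down both $W$'s size and $H$'s structure.
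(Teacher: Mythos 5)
Your outline matches the paper's high-level strategy (find a set of $s$ vertices whose deletion leaves an $S_l$-free graph, then bound the three pieces $e(G[U])$, $e(U,V\setminus U)$, $e(G[V\setminus U])$), but the two steps that actually carry the proof are missing. First, the existence of the size-$s$ transversal $W$ does not follow from $(s+1)S_l$-freeness by ``greedy extraction'': a graph with no $s+1$ disjoint copies of $S_l$ need not admit $s$ vertices meeting every $S_l$. The paper gets this only for the \emph{extremal} $G$, by induction on $s$: the induction hypothesis plus the lower bound force $G$ to contain $sS_l$, an edge count shows every star $K$ in that packing satisfies $e(G)-e(G\setminus V(K))\ge n-s$, hence each star owns a vertex of degree at least $\frac{n-s}{l+1}$, and these $s$ high-degree vertices can regenerate disjoint stars to kill any leftover $S_l$. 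Your proposal never sets up this induction, so the transversal step is unsupported.

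Second, and more seriously, you explicitly defer the optimization of $e(G[W])+e(W,V(H))+e(H)$ to an ``anticipated obstacle,'' but that optimization \emph{is} the proof. The paper resolves it with two devices you do not have: (i) symmetrization inside $U$, which makes $G[U]$ complete bipartite with parts of sizes $x\ge y$ and removes exactly the ``non-complete-bipartite $G[W]$'' configurations you worry about; and (ii) a dichotomy on the size of a maximum independent set $I$ of $G[V\setminus U]$. When $|I|\le\lfloor\frac{n-s}{2}\rfloor$ one gets $e(U,V\setminus U)\le s|I|$ directly (each $U$-vertex has an independent neighbourhood) and lands on $e(G_1(s))$; when $|I|>\lfloor\frac{n-s}{2}\rfloor$ one needs the sharper bound $e(G)\le s|I|+(l-1)(n-s-|I|)+y(n-2|I|-y)$, obtained from the disjointness of the $V\setminus U$-neighbourhoods of $U_1$ and $U_2$ together with the fact that all edges of the $S_l$-free graph $G[V\setminus U]$ meet $V\setminus(U\cup I)$, followed by a case analysis on $t=n-2|I|$ that separates $l<s+1$ from $l\ge s+1$ and the two parities of $n-s$. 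Without this independent-set accounting the trade-off between $e(U,V\setminus U)$ and $e(G[V\setminus U])$ is not controlled, and neither the value of the maximum nor the characterization of $\mathcal{G}_1(s)$ versus $\mathcal{G}_2(s)$ can be extracted. As written, the proposal is a plausible plan whose decisive steps remain to be done.
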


\section{The Tur\'{a}n number of $\{ K_{k+1},(s+1)S_l\}$ $(k\geq 3)$}
The following lemma is trivial.

\begin{lemma} \label{sl}
   For any $l$ and $n\geq l^2+2$, $ex(n,S_{l+1})= \lfloor \frac{ln}{2} \rfloor$.  The $l$-regular graph or almost $l$-regular graph on $n$ vertices is an extremal graph for $S_{l+1}$.
\end{lemma}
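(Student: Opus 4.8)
The upper bound is immediate from the Erd\H{o}s--Gallai type counting argument: if $G$ is $S_{l+1}$-free on $n$ vertices then every vertex has degree at most $l$, so $e(G)=\tfrac12\sum_{v}d(v)\le \tfrac{ln}{2}$, and since $e(G)$ is an integer we get $e(G)\le\lfloor ln/2\rfloor$. The whole content is therefore the \emph{lower bound}, i.e.\ exhibiting an $S_{l+1}$-free graph on $n$ vertices with exactly $\lfloor ln/2\rfloor$ edges, and noting that such a graph must be $l$-regular (when $ln$ is even) or almost $l$-regular, i.e.\ with all degrees $l$ except one vertex of degree $l-1$ (when $ln$ is odd). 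So the plan is: first reduce the claim to the existence of an $l$-regular graph on $n$ vertices when $ln$ is even, and of an almost $l$-regular graph when $ln$ is odd (the latter forces $l$ odd and $n$ odd).

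For the existence, the cleanest route is to invoke Lemma \ref{regu}, which is stated just above and already produces an $l$-regular (or almost $l$-regular) graph on $n$ vertices for $n\ge l^2+2$ — indeed it produces a $K_3$-free one, which is more than enough here. Alternatively one can give the standard self-contained construction: when $n$ is even, take the circulant graph $C_n(1,2,\dots,l/2)$ if $l$ is even, or $C_n(1,2,\dots,(l-1)/2)$ together with the perfect matching $\{i,i+n/2\}$ if $l$ is odd; when $n$ is odd, $l$ must be even for an $l$-regular graph, and the circulant $C_n(1,\dots,l/2)$ works, while if $l$ is odd one deletes one edge of the even-$n$ construction applied to $n$ (shifting down) to get the almost $l$-regular graph — but since Lemma \ref{regu} is available, simply citing it avoids all case analysis.

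The only mild subtlety — and really the one line that needs care — is the parity bookkeeping in the ``almost $l$-regular'' case: $\lfloor ln/2\rfloor$ edges with maximum degree $\le l$ forces the degree sequence to be $(l,l,\dots,l,l-1)$ exactly when $ln$ is odd, which happens precisely when both $l$ and $n$ are odd, and one must confirm Lemma \ref{regu}(2) delivers a graph of this exact degree sequence in that regime. Once that is checked, the construction is $S_{l+1}$-free because $\Delta\le l$ means no vertex has $l$ neighbours, hence no $S_l$ (a star with $l$ leaves), and it has $\lfloor ln/2\rfloor$ edges by the degree sum, completing both the bound and the identification of the extremal graph. I expect the main (very small) obstacle to be nothing more than stating this parity reduction cleanly; there is no real difficulty here, which is why the authors call the lemma ``trivial.''
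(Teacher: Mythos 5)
Your proof is correct and is exactly the intended argument: the paper offers no proof of this lemma at all (it simply declares it trivial), and your route --- the upper bound from the observation that $S_{l+1}$-freeness forces $\Delta(G)\le l$, hence $e(G)\le\lfloor ln/2\rfloor$, combined with Lemma \ref{regu} to supply an $l$-regular or almost $l$-regular graph on $n\ge l^2+2$ vertices achieving this bound --- is the standard one and matches how the lemma is used in the paper. The only blemish is the off-by-one slip in your final paragraph: $\Delta\le l$ means no vertex has $l+1$ neighbours and hence no copy of $S_{l+1}$ (an $l$-regular graph certainly does contain copies of $S_l$), but this is plainly a typo rather than a gap.
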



\noindent\textbf{Proof of Lemma \ref{regu}.}
Given a vertex set $V$ of size $n$, we
    divide the vertex set $V$ into $2$ balanced parts $V_1$ and $V_2$, where $|V_1|=\lfloor \frac{n}{2} \rfloor, |V_2|=\lceil \frac{n}{2} \rceil$. Let $\lfloor \frac{n}{2} \rfloor=ml+r, \lceil \frac{n}{2} \rceil=ml+r+c$, where $c=0$ if $n$ is even and $c=1$ if $n$ is odd, $0\leq r<l$. Then we  divide both $V_1$ and $V_2$ into $m+1$ parts, where each of the first $m$ parts contains $l$ vertices while the last part contains $r$ and $r+c$ vertices respectively. Let the parts be $V_{1,1},\cdots,V_{1,m+1}$ and $V_{2,1},\cdots,V_{2,m+1}$, i.e. $|V_{1,i}|=|V_{2,i}|=l,i\in [m]$ and $|V_{1,m+1}|=r, |V_{2,m+1}|=r+c$.
   We label the vertices in $V_{1,m+1}$  be $a_1,\cdots,a_r$, label the vertices in  $V_{2,m+1}$ be $b_1,\cdots, b_r$ if $c=0$ and label the vertices in  $V_{2,m+1}$ be $b_1,\cdots, b_r, v_0$ if $c=1$.
   Then we connect every vertex in $V_{1,i}$ to every vertex in $V_{2,i}$ for $i\in [m]$ and connect every $a_i, i=1,\cdots, r$ to every $b_j, j=1,\cdots, r$.
    Then for any $ i\in [m]$, $V_{1,i}$ and $V_{2,i}$ form an $l$-regular bipartite graph, $\{a_1, a_2, \cdots, a_r\}$ and $\{b_1,\cdots, b_r\}$ form an $r$-regular bipartite graph. Next we apply some changes to construct a $K_3$-free $l$-regular graph or a $K_3$-free almost $l$-regular graph.

    (1) If $n$ is even,
    we select  an edge $xy$ between  $V_{1,i_0}$ and $V_{2,i_0}$ where $x\in V_{1,i_0}, y\in V_{2,i_0}$, $i_0\in [m]$. We delete $xy$ and add two edges $xb_1,ya_1$, then $x$ and $y$ remain degree $l$, the degrees of both $a_1$ and $b_1$ increase by $1$. By applying such an operation $l-r$ times, we change the degrees of both $a_1$ and $b_1$ from $r$ to $l$, and the degrees of other vertices remain the same. Since $n\geq l^2+2$, applying the similar  operation for $(a_i,b_i), 2\leq i\leq r$, we get an $l$-regular bipartite graph $\mathcal{R}$ on the vertex set $V_1\cup V_2$.

   (2)  If $n$ is odd, we  choose an edge $x_1y_1$ between  $V_{1,i_1}$ and $V_{2,i_1}, i_1\in [m]$,  delete it and add two edges $x_1v_0$ and $y_1v_0$. Then $x_1$ and $y_1$ remain degree $l$, the degree of $v_0$ increases by $2$.
    If $l-2>2$, we choose an edge $x_2y_2$ between $V_{1,i_2}$ and $V_{2,i_2}, $ where $i_2\in [m], i_2\not= i_1$.
    Since $n\geq l^2+2, $ continue this operation $\lfloor \frac{l}{2}\rfloor$ times, we change the degree of $v_0$ from $0$ to $l-1$ or $l$, and the degrees of other vertices remain the same. It is easy to check that the resulting graph $\mathcal{R}$ is a $K_3$-free $l$-regular graph or a $K_3$-free almost $l$-regular graph and $\mathcal{R}[V\setminus v_0]$ is a bipartite graph. \qed


\noindent\textbf{Proof of Theorem \ref{main_1}.} We use induction on $s$.
If $s=0$, by Lemmas \ref{sl} and \ref{regu},
$ex(n,\{K_{k+1},S_l\})= ex(n,S_l)=\lfloor \frac{(l-1)n}{2}\rfloor$, and a $K_3$-free $(l-1)$-regular graph or a $K_3$-free almost $(l-1)$-regular graph on $n$ vertices is an extremal graph for $\{K_{k+1},S_l\}$.
 Now suppose the conclusion is true for $0\leq s'\leq s$.
One can verify that $T_{k-2}(s)\vee \mathcal{R}$ is $\{K_{k+1},(s+1)S_l\}$-free, where $\mathcal{R}$ is a $K_3$-free $(l-1)$-regular graph or a $K_3$-free almost $(l-1)$-regular graph  on $n-s$ vertices. Thus
\begin{eqnarray}\label{eq1}
ex(n,\{K_{k+1},(s+1)S_l\})&\geq &e(T_{k-2}(s)\vee \mathcal{R})\nonumber\\
&=& e(T_{k-2}(s))+s(n-s)+\lfloor \frac{(l-1)(n-s)}{2}\rfloor.
\end{eqnarray}
Next we prove the upper bound. Let $\mathcal{G}$ be the set of $n$-vertex $\{K_{k+1},(s+1)S_l\}$-free graphs which maximize the number of edges.
For any graph $G$ in  $\mathcal{G}$, we have the following claim.
\begin{claim}\label{claim_1}
   There exists a vertex set $U\subseteq V(G)$ with size $s$ such that $G[V\setminus U]$ is $S_l$-free.
\end{claim}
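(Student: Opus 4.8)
The plan is to argue by contradiction, using the edge‑maximality of $G$ together with the induction hypothesis of Theorem~\ref{main_1}, which we may invoke for every parameter at most $s$. (If $s=0$ the claim is trivial: $G$ is already $\{K_{k+1},S_l\}$-free, so $U=\emptyset$ works.) \emph{Step 1: $G$ contains exactly $s$ pairwise vertex-disjoint copies of $S_l$.} Let $\mathcal S=\{S^1,\dots,S^t\}$ be a maximum such collection; as $G$ is $(s+1)S_l$-free, $t\le s$. If $t\le s-1$, then $G$ is $sS_l$-free, hence $e(G)\le ex(n,\{K_{k+1},sS_l\})$; but the explicit construction $T_{k-2}(s)\vee\mathcal R$ and the formula (for the smaller parameter, available by induction) give $ex(n,\{K_{k+1},(s+1)S_l\})>ex(n,\{K_{k+1},sS_l\})$ for large $n$, contradicting $G\in\mathcal G$. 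So $t=s$; write $W=\bigcup_iV(S^i)$, a set of $s(l+1)$ vertices, and note that by maximality of $\mathcal S$ the graph $G[V(G)\setminus W]$ has maximum degree at most $l-1$.

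\emph{Step 2: the apex set and the main case.} Set $B:=s(l+1)+l+1$ and $X:=\{v\in V(G):\deg_G(v)\ge B\}$. Then $|X|\le s$: from $s+1$ vertices of degree $\ge B$ one builds $s+1$ disjoint copies of $S_l$ greedily with those vertices as centres, since at step $i$ at most $(s+1)+(i-1)l\le B-1$ vertices are forbidden as leaves, contradicting $(s+1)S_l$-freeness. If $|X|=s$, take $U:=X$. Were $G\setminus U$ to contain a copy of $S_l$, centred at some $z\notin U$, that copy would avoid $U$, and attaching one star centred at each vertex of $X$ — at step $i$ at most $(l+1)+(s-1)+(i-1)l\le B-1$ vertices are forbidden — would again produce $s+1$ disjoint copies of $S_l$. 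Hence $U=X$ works whenever $|X|=s$.

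\emph{Step 3: ruling out $|X|<s$.} Write $r=|X|$ and suppose $r<s$. One checks that $H:=G\setminus X$ is $\{K_{k+1},(s-r+1)S_l\}$-free: $s-r+1$ disjoint copies of $S_l$ in $H$ together with a star centred at each vertex of $X$ (possible since each has degree $\ge B$: one verifies at most $s(l+1)\le B-1$ vertices are ever forbidden) would yield $s+1$ disjoint copies in $G$. By the induction hypothesis, $e(H)\le ex(n-r,\{K_{k+1},(s-r+1)S_l\})=e(T_{k-2}(s-r))+(s-r)(n-s)+\lfloor \frac{(l-1)(n-s)}{2}\rfloor$. Combining this with $e(G)=e(H)+e(G[X])+e_G(X,V(G)\setminus X)$, the bound $e(G[X])\le e(T_k(r))$ coming from $K_{k+1}$-freeness of $G$, and a careful estimate of $e_G(X,V(G)\setminus X)$ that exploits that a vertex of $X$ cannot be joined to the low-degree part of $H$ without creating a $K_{k+1}$, one should derive $e(G)<ex(n,\{K_{k+1},(s+1)S_l\})$, contradicting $G\in\mathcal G$. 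Therefore $r=s$, and Step 2 finishes the proof.

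I expect Step 3 to be the real obstacle. The crude bound $e_G(X,V(G)\setminus X)\le r(n-r)$ overshoots the target by a term of order $rn$, so one cannot avoid analysing how $X$ attaches to the (near-extremal) graph $H$: the point is that $X$ together with the $s-r$ high-degree vertices inherited from $H$ must act as a single apex set of size $s$ over a remainder of maximum degree $l-1$, and pinning this down is where both the $K_{k+1}$-freeness and the hypothesis $n\ge ks^2+(s+1)(l+1)^2$ get used.
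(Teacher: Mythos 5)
Your Step 3 is a genuine gap, and you have correctly diagnosed it yourself: defining $U$ as the set $X$ of vertices of global degree at least $B=(s+1)(l+1)$ forces you to handle the case $|X|<s$, and there the crude estimate $e_G(X,V\setminus X)\le r(n-r)$ combined with the inductive bound on $e(G\setminus X)$ lands \emph{above} the target value $e(T_{k-2}(s))+s(n-s)+\lfloor\frac{(l-1)(n-s)}{2}\rfloor$ by an additive term of order $rs$, so no contradiction follows without a substantially finer analysis of how $X$ attaches to $G\setminus X$. You outline what such an analysis would have to establish but do not carry it out, so the proof is incomplete. (Steps 1 and 2 are fine: the existence of $sS_l$ via induction, the bound $|X|\le s$, and the greedy reconstruction of $(s+1)S_l$ when $|X|=s$ all work.)

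The paper sidesteps the problematic case entirely by locating the $s$ high-degree vertices \emph{inside} the fixed copy $T_0$ of $sS_l$, one per star, rather than by a global degree threshold. Concretely: for any single star $K$ of $T_0$, the graph $G\setminus V(K)$ is $sS_l$-free, so the induction hypothesis bounds $e(G\setminus V(K))$; subtracting this from the lower bound on $e(G)$ shows that at least $n-s$ edges meet $V(K)$, hence some vertex of $K$ has degree at least $\frac{n-s}{l+1}$. Doing this for each of the $s$ stars produces exactly $s$ such vertices, which form $U$, and the same greedy argument as your Step 2 shows $G[V\setminus U]$ is $S_l$-free. This per-star edge count is the ingredient your argument is missing; if you insist on your route, you would need an analogous counting argument to rule out $|X|<s$, which is essentially equivalent work.
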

\begin{proof}
   By (\ref{eq1})  and $n\geq ks^2+(s+1)(l+1)^2$, we have
   \begin{eqnarray*}
   e(G)&\geq &e(T_{k-2}(s))+s(n-s)+\lfloor \frac{(l-1)(n-s)}{2}\rfloor\\
   &>& e(T_{k-2}(s-1))+(s-1)(n-s+1)+\lfloor \frac{(l-1)(n-s+1)}{2}\rfloor.
    \end{eqnarray*}
    Combining with  the induction hypothesis, we conclude that $G$ must contain a copy of $sS_l$. We denote it by $T_0$.

   We choose an arbitrary copy of $S_l$ in $T_0,$ say $K$. Note that $G\setminus V(K)$ is $sS_l$-free. By the induction hypothesis
    \begin{eqnarray*}
   e(G\setminus V(K))\leq e(T_{k-2}(s-1))+(s-1)(n-s-l)+\lfloor \frac{(l-1)(n-s-l)}{2}\rfloor.
    \end{eqnarray*}
   Let $m_0$ be the number of edges incident to $V(K)$ in $G$. Then
   \begin{eqnarray*}
   m_0&=&e(G)-e(G\setminus V(K))\\
   &\geq &e(T_{k-2}(s))+s(n-s)+\lfloor \frac{(l-1)(n-s)}{2}\rfloor\\
  & & -\left(e(T_{k-2}(s-1))+(s-1)(n-s-l)+\lfloor \frac{(l-1)(n-s-l)}{2}\rfloor\right)\\
   &\geq& n-s,
     \end{eqnarray*}
    which implies that each copy of $S_l$ in $T_0$ must contain a vertex of degree at least $\frac{n-s}{l+1}$. Let $U$ be the set of such a vertex from each $S_l$ in $T_0$, then $|U|=s$. We claim that $G[V\setminus U]$ is $S_l$-free. Otherwise, $G[V\setminus U]$ contains a copy of $S_l$, say $T$. Since  $d(u)\geq \frac{n-s}{l+1}$ for any $u$ in $U$ and  $n\geq ks^2+(s+1)(l+1)^2$, $G\setminus V(T)$ must contain a copy of $sS_l$. Then $G$ contains a copy of $(s+1)S_l$, a contradiction.
\end{proof}
For two non-adjacent vertices $u$ and $v$ in $G$, if we delete the edges connecting $u$ to $N_G(u)$ and add new edges connecting $u$ to $N_G(v)$, we call such an operation as {\sl symmetrization} and we say {\sl symmetrize $u$ to $v$}. Let the resulting graph be $G_{u\rightarrow v}$. Then $V(G_{u\rightarrow v})=V(G)$ and $E(G_{u\rightarrow v})=(E(G)\setminus E(u,N_G(u))\cup E(u,N_G(v))$.

We call two vertices $u$ and $v$ in $V(G)$ are {\sl equivalent} if and only if $N_G(u)=N_G(v)$. Obviously, it is  an equivalent relation.  So the vertices in $U$ can be partitioned into equivalent classes and  every equivalent class is  an independent set. Among $\mathcal{G}$, we choose the graphs that minimize the number of equivalent classes of $U$, and denote the family of such graphs by $\mathcal{G}'$. For any graph $G$ in $\mathcal{G}'$, we have the following claim.
\begin{claim}\label{claim_2}
   Any two non-adjacent vertices of $U$ have the same neighborhood in $G$.
\end{claim}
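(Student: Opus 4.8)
The plan is to argue by contradiction using an extremal (symmetrization) argument. Suppose that in some $G\in\mathcal{G}'$ there exist two non-adjacent vertices $u,v\in U$ with $N_G(u)\neq N_G(v)$. Without loss of generality assume $d_G(u)\leq d_G(v)$, and consider the graph $G'=G_{u\to v}$ obtained by symmetrizing $u$ to $v$. Since $u$ and $v$ are non-adjacent and $N_{G'}(u)=N_{G'}(v)$, this operation does not create any edge $uv$; moreover $e(G')=e(G)-d_G(u)+d_G(v)\geq e(G)$. The first task is to check that $G'$ is still $\{K_{k+1},(s+1)S_l\}$-free. For the clique condition, any $K_{k+1}$ in $G'$ using $u$ would, after replacing $u$ by $v$ (legitimate since $N_{G'}(u)=N_{G'}(v)$ and $uv\notin E(G')$), give a $K_{k+1}$ in $G$ on $V\setminus\{u\}$, a contradiction. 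For the star-forest condition one argues the same way: a copy of $(s+1)S_l$ in $G'$ meeting $u$ can have its occurrence of $u$ (as a leaf or as a centre) transferred to $v$—here one uses that any copy of $(s+1)S_l$ has at most $(s+1)(l+1)$ vertices, so if it uses both $u$ and $v$ we may instead re-route through a common neighbour, and if it uses only $u$ we replace $u$ by $v$ directly—producing a forbidden configuration already in $G$.

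Once $G'$ is seen to be $\{K_{k+1},(s+1)S_l\}$-free with $e(G')\geq e(G)$, extremality of $G$ forces $e(G')=e(G)$, hence $d_G(u)=d_G(v)$ and $G'\in\mathcal{G}$ as well. The point of the symmetrization is that in $G'$ the vertices $u$ and $v$ now lie in the same equivalence class, while every other equivalence class of $U$ is unchanged (symmetrizing $u$ to $v$ only alters $N(u)$, and $u\in U$). Therefore $G'$ has strictly fewer equivalence classes of $U$ than $G$ does, contradicting the choice of $G\in\mathcal{G}'$ as a minimizer of that quantity. This contradiction shows no such pair $u,v$ exists, i.e. any two non-adjacent vertices of $U$ have the same neighborhood.

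The one subtlety I expect to need care is the verification that symmetrization preserves $(s+1)S_l$-freeness, since $U$ consists of high-degree vertices that are likely to participate in many star forests; the key observation making it go through is that $u\in U$ lies outside the $S_l$-free part $G[V\setminus U]$ (Claim \ref{claim_1}) and that a star forest has bounded size, so any embedding of $(s+1)S_l$ into $G'$ that is "new" must use $u$, and the replacement of $u$ by $v$ (or a common neighbour, when $v$ is also used) is always available because $N_{G'}(u)=N_{G'}(v)$ and $uv\notin E(G')$. Everything else is the routine bookkeeping of an extremal symmetrization argument.
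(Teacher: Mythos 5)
Your overall strategy (symmetrize $u$ to $v$, use edge-maximality to force equal degrees, then use the minimality of the number of equivalence classes to force equal neighbourhoods) is the same as the paper's, but two steps as written do not close. First, your verification that $G_{u\to v}$ remains $(s+1)S_l$-free is the weak point you yourself flagged, and the patch you sketch does not work: if a copy of $(s+1)S_l$ in $G_{u\to v}$ uses both $u$ and $v$, you cannot ``replace $u$ by $v$'' since $v$ is already occupied, and ``re-routing through a common neighbour'' is not an argument --- the common neighbours of $u$ and $v$ may all be occupied by the star forest as well. The paper avoids this case analysis entirely with a pigeonhole step: a copy of $(s+1)S_l$ consists of $s+1$ vertex-disjoint stars while $|U|=s$, so some star avoids $U$ and hence lies in $G_{u\to v}[V\setminus U]=G[V\setminus U]$, which is $S_l$-free by Claim \ref{claim_1}; no copy need be transplanted back into $G$ at all. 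You gesture at Claim \ref{claim_1} in your closing remark but never actually invoke it this way.

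Second, your final counting of equivalence classes is wrong as stated. After symmetrizing the single vertex $u$ to $v$, the class $W_1$ containing $u$ loses one member and the class $W_2$ containing $v$ gains one; unless $W_1=\{u\}$, the class $W_1\setminus\{u\}$ survives and the total number of classes does \emph{not} strictly decrease, so you get no contradiction with $G\in\mathcal{G}'$. The paper fixes this by first establishing that all non-adjacent vertices of $U$ have equal degree (so each symmetrization is edge-neutral) and then symmetrizing \emph{every} vertex of $W_1$ to $v$ one by one, which merges $W_1$ into $W_2$ and genuinely reduces the class count. Both gaps are repairable, but both require the specific devices the paper uses rather than the ones you propose.
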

\begin{proof}
   We first prove that when we do symmetrization for non-adjacent vertices in $U$, the resulting graph is still $\{K_{k+1},(s+1)S_l\}$-free. Suppose $u$ and $v$ are non-adjacent vertices in $U$, and we symmetrize $u$ to $v$, then $G_{u\rightarrow v}[V\setminus U]=G[V\setminus U]$. By Claim \ref{claim_1}, $G_{u\rightarrow v}[V\setminus U]$ is $S_l$-free, then $G_{u\rightarrow v}$ is $(s+1)S_l$-free. Moreover, $G_{u\rightarrow v}$ is $K_{k+1}$-free.
   If $G_{u\rightarrow v}$ contains a copy of  $K_{k+1}$, say $T$, then $T$ must contain $u$. Since $u$ and $v$ are non-adjacent, $(V(T)\setminus \{u\})\cup \{v\}$ will induce a copy of  $K_{k+1}$ in $G$, which contradicts the assumption  that $G$ is $K_{k+1}$-free.

   Next we claim that any non-adjacent vertices in $U$  have the same degree. Otherwise, without loss of generality, suppose $u$ and $v$ are non-adjacent vertices in $U$ but $d(u)<d(v)$. Then we symmetrize $u$ to $v$, $G_{u\rightarrow v}$ is still $\{K_{k+1},(s+1)S_l\}$-free and $e(G_{u\rightarrow v})=e(G)-d(u)+d(v)>e(G)$, which contradicts the assumption that $G\in \mathcal{G}$.

   Suppose there exists non-adjacent vertices $u$ and $v$ in $U$ with $N_G(u)\neq N_G(v)$, then $u$ and $v$ must belong to different equivalent classes $W_1$ and $W_2$. Since $W_1$ is an independent set, for any vertex  $w$ in $W_1$, $d(w)=d(u)=d(v)$. We symmetrize all the vertices in $W_1$ to $v$ one by one, and denote the resulting graph by $G'$, then $e(G)=e(G')$ and $G'$ is still $\{K_{k+1},(s+1)S_l\}$-free. However, in this case $W_1$ and $W_2$ become one equivalent class in $G'$, so the number of equivalent classes decreases, which contradicts the assumption that $G\in \mathcal{G}'$.
\end{proof}
By Claim \ref{claim_2},  we conclude  that $G[U]$ is a complete $m$-partite graph. Let the vertex partition of $G[U]$ be $V_1,\cdots,V_m$, and $|V_1|\geq \cdots\geq |V_m|>0$. Since $G[U]$ is $K_{k+1}$-free, $m\leq k$. For any vertex $v$ in $V\setminus U$, if $v$ is adjacent to one vertex in some $V_i, i\in [m]$, then $v$ is adjacent to all the vertices in $V_i$ by Claim \ref{claim_2}. For each $V_i$, we denote by $N_{G[V\setminus U]}(V_i)$ the common neighborhood of vertices of $V_i$ in $G[V\setminus U]$, and $d_{G[V\setminus U]}(V_i)=|N_{G[V\setminus U]}(V_i)|$.

Next we show that $m=k-2$.
\begin{claim}\label{claim_3}
   $m=k-2$.
\end{claim}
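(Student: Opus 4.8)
The plan is to pin down $m$ by squeezing it between $k-2$ and $k-2$: first show $m\le k-2$, then $m\ge k-2$, each by an extremal‑count argument against the lower bound (\ref{eq1}). Recall that $G[U]$ is complete $m$-partite with parts $V_1,\dots,V_m$, and by Claim~\ref{claim_1} the graph $H:=G[V\setminus U]$ is $S_l$-free, so $e(H)\le \lfloor\frac{(l-1)(n-s)}{2}\rfloor$ by Lemma~\ref{sl}. Hence we can always write
\[
e(G)\;=\;e(G[U])+e(U,V\setminus U)+e(H)\;\le\; e(G[U])+e(U,V\setminus U)+\left\lfloor\tfrac{(l-1)(n-s)}{2}\right\rfloor .
\]
So the whole question reduces to bounding $e(G[U])+e(U,V\setminus U)$ and comparing with $e(T_{k-2}(s))+s(n-s)$.

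For the upper bound $m\le k-2$: suppose $m\ge k-1$. Each vertex of $V\setminus U$ sees, by Claim~\ref{claim_2}, a union of whole classes $V_i$; since $G$ is $K_{k+1}$-free and $G[U]$ already contains a $K_m$, any $v\in V\setminus U$ can be adjacent to at most $k-m$ of the classes, and moreover $v$ together with its neighborhood in $U$ must not create a $K_{k+1}$ — but more usefully, the classes $v$ is joined to, together with a largest clique using the other classes, is constrained. The clean way: let $v$ be joined to classes indexed by $I\subseteq[m]$; then $\{v\}\cup\bigcup_{i\in I}V_i\cup(\text{one vertex from each of }\min(k-1-|I|,\,m-|I|)\ \text{remaining classes})$… — rather than chase this, the slicker route is to bound $e(U,V\setminus U)$ directly: if $m\ge k-1$, then each $v\in V\setminus U$ is nonadjacent to at least one class, and if that class has the maximum size we lose a lot; summing, $e(U,V\setminus U)\le s(n-s)-(\text{something growing in }n)$ unless many vertices miss only small classes, in which case the deficiency in $e(G[U])$ (a complete $(k-1)$- or $k$-partite graph on $s$ vertices has fewer edges than we can afford once we account for the forced losses) plus the $H$-bound still falls short of (\ref{eq1}) when $n\ge ks^2+(s+1)(l+1)^2$. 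I would make this precise by noting $e(G[U])+e(U,V\setminus U)\le e(G[U])+\max_{v}\,|N_U(v)|\cdot(n-s)$ and that $\max_v|N_U(v)|\le s-|V_m|$ when $m\ge k$, respectively $\le s$ only if $v$ hits all classes which is impossible for $m=k-1$ together with... — carefully, for $m=k-1$ a vertex of $V\setminus U$ adjacent to all of $U$ would give $K_k$ plus an edge is fine, but adjacent to all of $U$ means it plays the role of a new class, and then two such vertices nonadjacent to each other are fine but adjacent would be $K_{k+1}$; so the $V\setminus U$ vertices fully joined to $U$ form an independent set, contributing at most the bipartite count, and one compares $e(K_{k-1}\text{-partite on }s)+s(n-s)$ against the target $e(T_{k-2}(s))+s(n-s)$: since $e(T_{k-1}(s))\le e(T_{k-2}(s))$ is false, one actually needs the $S_l$-free part: if $m=k-1$, the fully-joined vertices cannot themselves contain an edge among a further neighbor, forcing $H$ to be $S_{l}$-free \emph{and} having bounded degree interacting with $U$, dropping $e(H)$ below $\lfloor\frac{(l-1)(n-s)}{2}\rfloor$ by a linear-in-$n$ amount. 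Either way the inequality is strict against (\ref{eq1}) for large $n$, contradiction.

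For the lower bound $m\ge k-2$: suppose $m\le k-3$. Then $G[U]$ has at most $e(T_{k-3}(s))$ edges, and every $v\in V\setminus U$ sees at most all of $U$, so $e(G[U])+e(U,V\setminus U)\le e(T_{k-3}(s))+s(n-s)$. Even adding the full $S_l$-free bound $\lfloor\frac{(l-1)(n-s)}{2}\rfloor$ for $e(H)$, we get $e(G)\le e(T_{k-3}(s))+s(n-s)+\lfloor\frac{(l-1)(n-s)}{2}\rfloor$, which is \emph{less} than the right side of (\ref{eq1}) by $e(T_{k-2}(s))-e(T_{k-3}(s))>0$ (here $s\ge 1$; the case $s=0$ has $U=\emptyset$ and is the induction base already handled, or trivially $m=0=k-2$ is impossible so this direction is only needed for $s\ge1$, and for $s\ge1$ we have $e(T_{k-2}(s))>e(T_{k-3}(s))$ since a $K_{k+1}$-free graph on $s\ge1$ vertices gains edges by using more parts up to $\min(k-2,s)$ — one should note $s$ may be smaller than $k-2$, in which case $T_{k-2}(s)=T_{k-3}(s)=K_s$ and this argument collapses; then instead observe that with $m\le k-3$ and $s<k-2$ we have $G[U]=K_s$ already, so $m=s$, and we must separately rule out $m=s<k-2$ by exhibiting that then $G$ cannot be edge‑maximal — one can add a new part, i.e., move a $V\setminus U$ vertex into $U$, increasing edges — contradicting $G\in\mathcal G$). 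So in all cases $m\le k-3$ is impossible and $m\ge k-2$.

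Combining the two directions gives $m=k-2$. The main obstacle I anticipate is the borderline bookkeeping when $s$ is small relative to $k$ (so that Turán graphs on $s$ vertices degenerate to $K_s$) and the careful handling, in the $m=k-1$ sub-case of the upper bound, of vertices of $V\setminus U$ that are joined to \emph{all} of $U$: these force structural restrictions on $H=G[V\setminus U]$ (its high-degree vertices cannot coexist with a full join to $U$ without creating $K_{k+1}$), and quantifying the resulting linear-in-$n$ loss in $e(H)$ is the delicate step. Everything else is a direct comparison of the decomposition $e(G)=e(G[U])+e(U,V\setminus U)+e(H)$ with (\ref{eq1}), using $n\ge ks^2+(s+1)(l+1)^2$ to absorb the $O(s^2)$ error terms.
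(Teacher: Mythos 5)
Your overall strategy is the same as the paper's (decompose $e(G)=e(G[U])+e(U,V\setminus U)+e(G[V\setminus U])$, bound each piece, and compare against the lower bound (\ref{eq1})), and your treatment of $m\le k-3$ and of $m\ge k$ matches the paper's. But the critical sub-case $m=k-1$ is left genuinely incomplete, and the mechanism you sketch for it is partly wrong. You correctly identify that the set $Q$ of vertices of $V\setminus U$ joined to all of $U$ must be independent, but you then claim the contradiction comes from ``dropping $e(H)$ below $\lfloor\frac{(l-1)(n-s)}{2}\rfloor$ by a linear-in-$n$ amount.'' That is only true when $|Q|>\frac{k-1}{k}(n-s)$ (so that all edges of $H$ are incident to a set of size at most $\frac{1}{k}(n-s)$ of bounded degree). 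In the middle range $\frac{1}{k}(n-s)\le|Q|\le\frac{k-1}{k}(n-s)$, the graph $H$ can still be essentially $(l-1)$-regular with $Q$ on one side of a bipartition, so $e(H)$ loses nothing; there the linear loss must instead be extracted from $e(U,V\setminus U)$, as the paper does: every vertex outside $Q$ misses a whole class and hence sees at most $s-|V_{k-1}|$ vertices of $U$, giving $e(U,V\setminus U)\le (s-|V_{k-1}|)(n-s)+|V_{k-1}|\,|Q|\le s(n-s)-\frac{|V_{k-1}|(n-s)}{k}$. Without this case split (or the paper's equivalent dichotomy ``either some class has common neighborhood at most $\frac{k-1}{k}(n-s)$, or $Q$ is large''), the $m=k-1$ case does not close, and you yourself flag this as ``the delicate step'' without carrying it out.

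Two smaller points. First, your opening assertion for the upper bound, that for $m\ge k-1$ every vertex of $V\setminus U$ is nonadjacent to at least one class, is false for $m=k-1$ (a vertex may be joined to all $k-1$ classes, forming only a $K_k$); you later retract this, but the retraction is what forces the whole $Q$-analysis. Second, in the $m\le k-3$ direction your worry about $s<k-2$ is legitimate (then $T_{k-3}(s)=T_{k-2}(s)=K_s$ and the strict inequality the paper also uses degenerates), but your proposed repair --- ``move a $V\setminus U$ vertex into $U$ to add a new part'' --- is not meaningful, since $U$ is merely a vertex subset and relabelling it does not change $e(G)$; in that regime $G[U]=K_s$ forces $m=s$, and the conclusion of the theorem is unaffected because $T_{k-2}(s)=K_s$, but the claim as literally stated needs to be read accordingly rather than ``fixed'' by your device.
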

\begin{proof} We first show that $m\geq k-2$. Otherwise, $m\leq k-3$. By Claim \ref{claim_1}, $e(G[V\setminus U])\leq \lfloor \frac{(l-1)(n-s)}{2}\rfloor$. Then we have
   \begin{align}
      e(G)&=e(G[U])+e(U,V\setminus U)+e(G[V\setminus U])\nonumber\\
      &\leq e(T_m(s))+s(n-s)+\lfloor \frac{(l-1)(n-s)}{2}\rfloor\nonumber \\
      &\leq e(T_{k-3}(s))+s(n-s)+\lfloor \frac{(l-1)(n-s)}{2}\rfloor\nonumber\\
      &< e(T_{k-2}(s))+s(n-s)+\lfloor \frac{(l-1)(n-s)}{2}\rfloor\nonumber\\
      &\leq e(G)\nonumber,
   \end{align}
   a contradiction.

   If $m=k-1$, we  claim that there  exists a part $V_{i_0}$ in $U$ such that  $d_{G[V\setminus U]}(V_{i_0})\leq \frac{k-1}{k}(n-s)$. Otherwise, for any $j \in [m-1]$,  $ d_{G[V\setminus U]}(V_j)> \frac{k-1}{k}(n-s)$. This implies that for each $V_j$, there are at most $\frac{1}{k}(n-s)$ vertices in $V\setminus U$ which are not the common neighbor of $V_j$. Then the fact $m=k-1$ implies that there are at least $\frac{1}{k}(n-s)$ vertices in $V\setminus U$ which are  adjacent to all vertices in $U$.
   Let the set of such vertices be $Q$. So
    $|Q|\geq\frac{1}{k}(n-s)$. We claim that $Q$ is an independent set of $G$. Otherwise, an edge from $G[Q]$ and a vertex in each $V_j, j\in [k-1]$ will form a copy of $K_{k+1}$, a contradiction. So $e(G[Q])= 0$. Next we show that $|Q|\leq \frac{k-1}{k}(n-s)$. Otherwise, $V\setminus U$ can be partitioned into two parts, one is $Q$ with size at least $\frac{k-1}{k}(n-s)$ and another is $V\setminus (U\cup Q)$ with size at most $\frac{1}{k}(n-s)$.
    According to  Claim \ref{claim_1} and the fact $Q$ is an independent set of $G$, we get
   \begin{align}
      e(G)&=e(G[U])+e(U,V\setminus U)+e(G[V\setminus U])\nonumber\\
      &\leq e(T_{k-1}(s))+s(n-s)+\frac{1}{k}(l-1)(n-s)\nonumber\\
      &< e(T_{k-2}(s))+s(n-s)+\lfloor \frac{(l-1)(n-s)}{2}\rfloor\nonumber\\
      &\leq e(G)\nonumber
   \end{align}
   for $n\geq ks^2+(s+1)(l+1)^2$, a contradiction. So $\frac{1}{k}(n-s)\leq |Q|\leq \frac{k-1}{k}(n-s)$.
   Thus
   \begin{align}
      e(G)&=e(G[U])+e(U,V\setminus U)+e(G[V\setminus U])\nonumber\\
      &=e(G[U])+e(U,V\setminus (U\cup Q))+e(U,Q)+e(G[V\setminus U])\nonumber\\
      &\leq e(T_{k-1}(s))+(s-|V_{k-1}|)(n-s-|Q|)+s|Q|+ex(n-s,S_l)\nonumber\\
      &\leq e(T_{k-1}(s))+(s-|V_{k-1}|)(n-s)+|V_{k-1}||Q|+\lfloor \frac{(l-1)(n-s)}{2}\rfloor\nonumber\\
      &< e(T_{k-2}(s))+s(n-s)+\lfloor \frac{(l-1)(n-s)}{2}\rfloor\nonumber\\
      &\leq e(G)\nonumber
   \end{align}
    for $n\geq ks^2+(s+1)(l+1)^2$, a contradiction. Therefore there exists a $V_{i_0}$, $1\leq i_0\leq m-1$, such that  $d_{G[V\setminus U]}(V_{i_0})\leq \frac{k-1}{k}(n-s)$. Then
   \begin{align}
      e(G)&=e(G[U])+e(U,V\setminus U)+e(G[V\setminus U])\nonumber\\
      &\leq e(T_{k-1}(s))+(s-|V_{i_0}|)(n-s)+\frac{k-1}{k}|V_{i_0}|(n-s)+\lfloor \frac{(l-1)(n-s)}{2}\rfloor\nonumber\\
      &<e(T_{k-2}(s))+s(n-s)+\lfloor \frac{(l-1)(n-s)}{2}\rfloor\nonumber\\
      &\leq e(G)\nonumber
   \end{align} for $n\geq ks^2+(s+1)(l+1)^2$, a contradiction.

   If $m=k$,  we claim that there  exists a part $V_{i_0}$, $1\leq i_0\leq m$, in $U$ such that $d_{G[V\setminus U]}(V_{i_0})\leq \frac{k}{k+1}(n-s)$. Otherwise,  $ d_{G[V\setminus U]}(V_j)> \frac{k}{k+1}(n-s)$ for any $ j\in [k]$.
   This implies that for each $V_j$, there are at most $\frac{1}{k+1}(n-s)$ vertices in $V\setminus U$ which are not the common neighbor of $V_j$. Then the fact $m=k$ implies that there are at least $\frac{1}{k+1}(n-s)$ vertices in $V\setminus U$ which are  adjacent to all vertices in $U$.
   Let the set of such vertices be $Q$. So
    $|Q|\geq\frac{1}{k+1}(n-s)$.
   However, a vertex in $Q$ and a vertex from each $V_j$, $j\in [k]$
    will form a copy of $K_{k+1}$, a contradiction. Therefore there exists a $V_{i_0}$ in $U$ such that $d_{G[V\setminus U]}(V_{i_0})\leq \frac{k}{k+1}(n-s)$. Then
   \begin{align}
      e(G)&=e(G[U])+e(U,V\setminus U)+e(G[V\setminus U])\nonumber\\[2mm]
      &\leq e(T_{k}(s))+(s-|V_{i_0}|)(n-s)+\frac{k}{k+1}|V_{i_0}|(n-s)+\lfloor \frac{(l-1)(n-s)}{2}\rfloor\nonumber\\[2mm]
      &<e(T_{k-2}(s))+s(n-s)+\lfloor \frac{(l-1)(n-s)}{2}\rfloor\nonumber\\
      &\leq e(G)\nonumber
   \end{align} for $n\geq ks^2+(s+1)(l+1)^2$, a contradiction.
\end{proof}
By Claim \ref{claim_3} and (\ref{eq1}), we conclude that $ex(n,\{K_{k+1},(s+1)S_l\})=e(T_{k-2}(s))+s(n-s)+\lfloor \frac{(l-1)(n-s)}{2}\rfloor$, and
$T_{k-2}(s)\vee \mathcal{R}$ are extremal graphs for $\{K_{k+1},(s+1)S_l\}$,
where $\mathcal{R}$ is a $K_3$-free extremal graph for $S_l$ on $n-s$ vertices. This finishes the proof.\qed

\section{The Tur\'{a}n number of $\{K_3,(s+1)S_l\}$}
 Let $G_1(s)$  and $ G_2(s)$ be graphs in $\mathcal{G}_1(s)$ and $\mathcal{G}_2(s)$ respectively.
Obviously, graphs $ K_{s, n-s}, G_1(s)$ and $ G_2(s)$ all are $\{K_3,(s+1)S_{l}\}$-free,
   \begin{eqnarray*}
   e(G_1(s))&=&\lceil\frac{s}{2}\rceil\ \lfloor\frac{s}{2}\rfloor+s\lfloor \frac{n-s}{2}\rfloor+\lfloor \frac{(l-1)(n-s)}{2}\rfloor,\\[2mm]
   e(G_2(s))&=&\lceil\frac{s}{2}\rceil\ \lfloor\frac{s}{2}\rfloor+(l-1)\lfloor \frac{n-s}{2}\rfloor\
   + \lfloor\frac{s}{2}\rfloor  \lfloor \frac{n-s}{2}\rfloor
   +\lceil\frac{s}{2}\rceil \lceil\frac{n-s}{2}\rceil,
   \end{eqnarray*}
and $\mathcal{G}_1(s)$=$\mathcal{G}_2(s)$
 if $n-s$ is even.

\noindent\textbf{Proof of Theorem \ref{main_1remain}.}
We will show $\operatorname{ex}(n,\{K_3,(s+1)S_l\})= \max\{e(K_{s, n-s}), e(G_1(s)),e(G_2(s))\}$.  Since $K_{s,n-s}, G_1(s)$ and $G_2(s)$ are $\{K_3,(s+1)S_{l}\}$-free,
\begin{align}\label{3.1}
    \operatorname{ex}(n,\{K_3,(s+1)S_l\})\geq  \max\{e(K_{s, n-s}), e(G_1(s)),e(G_2(s))\}.
\end{align}
 Next we use induction on $s$ to prove  $\operatorname{ex}(n,\{K_3,(s+1)S_l\})\leq max\{e(K_{s, n-s}), e(G_1(s)),e(G_2(s))\}$.
 If $s=0$, by  Lemmas \ref{sl} and \ref{regu},
 $$\operatorname{ex}(n,\{K_3,(s+1)S_l\})=e(G_1(0))\leq \max\{e(K_{0,n}),e(G_1(0)),e(G_2(0))\}.$$ Now suppose the conclusion is true for $0\leq s'\leq s$.

 Let $\mathcal{G}$ be the set of $n$-vertex $\{K_3, (s + 1)S_l\}$-free graphs which maximize the number of edges. For any graph $G$ in $\mathcal{G}$, we have the following claim.

\begin{claim}
There exists a vertex set $U\subseteq V(G)$ with size $s$ such that $G[V\setminus U]$ is $S_l$-free.
\begin{proof}
   By (\ref{3.1}) and the induction hypothesis, we have
  \begin{eqnarray*}
      e(G)&\geq& \max\{e(K_{s, n-s}), e(G_1(s)),e(G_2(s))\}\\
      &>& \max\{e(K_{s-1, n-s+1}), e(G_1(s-1)),e(G_2(s-1))\}\\
      &\geq& ex(n,\{K_3,sS_l\}).
  \end{eqnarray*}
   Then $G$ contains  a copy of  $sS_l$. We denote it by $T_1$.
   \par We choose an arbitrary copy of $S_l$ in $T_1$, say $K_1$. Note that $G\setminus V(K_1)$ is $\{K_3, sS_l\}$-free. By the induction  hypothesis, we have
\begin{align}\label{e3.2}
    e(G\setminus V(K_1))\leq \max\{e(K_{s-1, n-s+1}), e(G_1(s-1)),e(G_2(s-1))\}.
\end{align}
\noindent
    Let $m_1$ be the number of edges incident to $V(K_1)$ in $G$. Combining (\ref{3.1}) and (\ref{e3.2}), we get
    \begin{align*}
        m_1&=e(G)-e(G\setminus V(K_1))\\[2mm]
           &\geq \max\{e(K_{s, n-s}), e(G_1(s)),e(G_2(s))\}- \max\{e(K_{s-1, n-s+1}), e(G_1(s-1)),e(G_2(s-1))\}\\[2mm]
           &\geq \min\{e(K_{s, n-s})-e(K_{s-1, n-s+1}), e(G_1(s))-e(G_1(s-1)), e(G_2(s))-e(G_2(s-1))\}\\[2mm]
           &\geq \frac{n}{2}+O(1),
    \end{align*}
    which implies that each copy of $S_l$ in $T_1$ must contain a vertex of degree at least $q=\frac{1}{2(l+1)}n+O(1)$. Let
$U$ be the set of such a vertex from each copy of $S_l$ in $T_1$. Then $|U| = s$ and $G[V\setminus U]$ is $S_l$-free.
\end{proof}
 \end{claim}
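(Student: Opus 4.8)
The plan is to mirror the high-degree extraction argument of Claim~\ref{claim_1}, now calibrated to the lower bound (\ref{3.1}) of the $k=2$ regime. First I would force a copy of $sS_l$ inside $G$. Since $G\in\mathcal{G}$ satisfies $e(G)\ge\max\{e(K_{s,n-s}),e(G_1(s)),e(G_2(s))\}$ by (\ref{3.1}), and since for $n$ large this quantity strictly exceeds the induction bound $ex(n,\{K_3,sS_l\})\le\max\{e(K_{s-1,n-s+1}),e(G_1(s-1)),e(G_2(s-1))\}$, the graph $G$ cannot be $sS_l$-free. Fix one such copy $T_1$, a disjoint union of $s$ stars $K_1,\dots,K_s$, each a copy of $S_l$ on $l+1$ vertices.

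Next I would extract from every $K_i$ a vertex of linear degree. For fixed $i$, the graph $G\setminus V(K_i)$ is $\{K_3,sS_l\}$-free, because an $sS_l$ in it would combine with the disjoint $K_i$ to produce $(s+1)S_l\subseteq G$; hence by the induction hypothesis (applied on its $n-(l+1)$ vertices) its edge count is at most $\max\{e(K_{s-1,n-s+1}),e(G_1(s-1)),e(G_2(s-1))\}$. Subtracting this from the lower bound on $e(G)$, the number $m_i$ of edges meeting $V(K_i)$ satisfies $m_i=e(G)-e(G\setminus V(K_i))\ge \tfrac{n}{2}+O(1)$, the point being that each of the three differences $e(K_{s,n-s})-e(K_{s-1,n-s+1})$, $e(G_1(s))-e(G_1(s-1))$, $e(G_2(s))-e(G_2(s-1))$ grows like $\tfrac{n}{2}$. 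Because $|V(K_i)|=l+1$, pigeonhole gives a vertex of $K_i$ of degree at least $q=\tfrac{1}{2(l+1)}n+O(1)$. Taking one such vertex from each of the $s$ pairwise-disjoint stars yields a set $U$ of exactly $s$ distinct vertices, each of degree at least $q$.

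The crux of the claim is to verify that $G[V\setminus U]$ is $S_l$-free. I would argue by contradiction: if $G[V\setminus U]$ contained a star $T\cong S_l$, occupying $l+1$ vertices disjoint from $U$, then I would try to build $s$ further vertex-disjoint copies of $S_l$, one centered at each $u\in U$, all avoiding $V(T)$, avoiding $U\setminus\{u\}$, and avoiding previously chosen leaves, so that $T$ together with these $s$ stars forms $(s+1)S_l\subseteq G$, contradicting $(s+1)S_l$-freeness. Constructing these greedily, when the $i$-th center is processed the set of forbidden leaf-candidates has size at most $|V(T)|+|U|+(i-1)l\le s(l+1)+O(s)$, whereas the center has at least $q=\Theta(n)$ neighbors; for $n$ large enough, $q$ dwarfs $s(l+1)+O(s)$, so at least $l$ usable leaves always remain and the process never stalls. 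The only genuinely quantitative point is comparing the constant hidden in $q$ against the forbidden count $s(l+1)+O(s)$, and this comparison --- the place where ``$n$ large enough'' is consumed --- is routine. This contradiction establishes the claim.
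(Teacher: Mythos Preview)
Your proposal is correct and follows essentially the same approach as the paper: use the lower bound (\ref{3.1}) against the induction hypothesis to force an $sS_l$, remove one star to bound the edges incident to it by at least $\tfrac{n}{2}+O(1)$, pigeonhole a high-degree vertex in each star, and assemble $U$. The only difference is that you actually spell out the greedy argument showing $G[V\setminus U]$ is $S_l$-free, whereas the paper simply asserts it here (relying implicitly on the identical verification in Claim~\ref{claim_1}); your added detail is harmless and correct.
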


Using the approach of Symmetrization as in the proof of Theorem \ref{main_1}, we can prove that
$G[U]$ is a complete $m$-partite graph.  Moreover, since $G[U]$ is $K_3$-free, $m\leq 2$. Let $U_1$ be the larger part in $G[U]$ and $U_2=U\setminus U_1$,  $|U_1|=x$ and $|U_2|=y$. Obviously,  $\lfloor\frac{s}{2}\rfloor\geq y\geq0$. We use $I$ to denote the maximum independent set in $G[V\setminus U]$.

\begin{claim}\label{claim 5}
    If $|I|\leq \lfloor\frac{n-s}{2}\rfloor$, then $e(G)\leq e(G_1(s))$.
    \begin{proof}
        Since $G$ is $K_3$-free,  the neighborhood of each vertex in $U$ is an independent set. Thus the number of edges between $U$ and $V\setminus U$ is at most $s|I|$. Notice that  $G[V\setminus U]$ is $S_l$-free, we have $e(G[V\setminus U])\leq \lfloor \frac{(l-1)(n-s)}{2}\rfloor$ by Lemma \ref{sl}. Hence
\begin{align}\label{eq3.2}
           e(G)&\leq e(G[U])+e(U,V\setminus U)+e(G[V\setminus U])\nonumber\\[2mm]
               &\leq \lceil\frac{s}{2}\rceil\ \lfloor\frac{s}{2}\rfloor+s|I|+\lfloor \frac{(l-1)(n-s)}{2}\rfloor\nonumber\\[2mm]
               &\leq\lceil\frac{s}{2}\rceil\ \lfloor\frac{s}{2}\rfloor+s\lfloor \frac{n-s}{2}\rfloor+\lfloor \frac{(l-1)(n-s)}{2}\rfloor\nonumber\\[2mm]
               &= e(G_1(s)).
        \end{align}
    \end{proof}
\end{claim}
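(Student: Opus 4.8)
The plan is to bound $e(G)$ from above through the decomposition
\[
e(G)=e(G[U])+e(U,V\setminus U)+e(G[V\setminus U])
\]
coming from the partition $V(G)=U\cup(V\setminus U)$ of the previous claim, to estimate each of the three terms using the hypothesis together with the structural facts already in hand, and to compare the resulting total with the value of $e(G_1(s))$ recorded just before this proof.

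First I would bound $e(G[U])$. Since $G[U]$ has already been shown (via the symmetrization argument of Theorem \ref{main_1}) to be a complete $m$-partite graph with $m\le 2$, we have $e(G[U])=xy$ with $x+y=s$ and $0\le y\le\lfloor s/2\rfloor$; as $(s-y)y$ is increasing in $y$ on $[0,\lfloor s/2\rfloor]$, this is at most $\lceil s/2\rceil\lfloor s/2\rfloor$. Next, for the edges between $U$ and $V\setminus U$, the point is that $G$ is $K_3$-free: for every $u\in U$ the set $N_G(u)$ is independent, so $N_G(u)\cap(V\setminus U)$ is an independent set of $G[V\setminus U]$, hence $|N_G(u)\cap(V\setminus U)|\le|I|$; summing over the $s$ vertices of $U$ gives $e(U,V\setminus U)\le s|I|$. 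Finally, $G[V\setminus U]$ is $S_l$-free by the previous claim, and since $n$ is large enough that $n-s\ge (l-1)^2+2$, Lemma \ref{sl} gives $e(G[V\setminus U])\le\lfloor(l-1)(n-s)/2\rfloor$.

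Combining these three estimates with the hypothesis $|I|\le\lfloor(n-s)/2\rfloor$ yields
\[
e(G)\le\left\lceil\frac{s}{2}\right\rceil\left\lfloor\frac{s}{2}\right\rfloor+s\left\lfloor\frac{n-s}{2}\right\rfloor+\left\lfloor\frac{(l-1)(n-s)}{2}\right\rfloor=e(G_1(s)),
\]
which is the asserted inequality. Once the structure of $G[U]$ is available the argument is mostly bookkeeping; the step that genuinely uses the triangle-free hypothesis — rather than only the $S_l$-freeness of $G[V\setminus U]$ — is the bound $|N_G(u)\cap(V\setminus U)|\le|I|$, and one must not forget that applying Lemma \ref{sl} requires $n$ to exceed the threshold of Theorem \ref{main_1remain}. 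This disposes of the case $|I|\le\lfloor(n-s)/2\rfloor$; the complementary case, where $I$ is a large independent set, will need a separate analysis comparing $e(G)$ with $e(G_2(s))$ (and, when $l<s+1$, with $e(K_{s,n-s})$).
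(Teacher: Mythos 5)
Your proof is correct and follows essentially the same route as the paper: the decomposition $e(G)=e(G[U])+e(U,V\setminus U)+e(G[V\setminus U])$, the bound $e(U,V\setminus U)\le s|I|$ from $K_3$-freeness, and Lemma \ref{sl} applied to the $S_l$-free graph $G[V\setminus U]$. You simply spell out the bound $e(G[U])\le\lceil\frac{s}{2}\rceil\lfloor\frac{s}{2}\rfloor$ and the size threshold for Lemma \ref{sl} more explicitly than the paper does.
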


\begin{claim}\label{eu}
    If $|I|>\lfloor\frac{n-s}{2}\rfloor$, then
    \begin{align*}
    e(G)\leq s|I|+(l-1)(n-s-|I|)+y(n-2|I|-y).
    \end{align*}
\begin{proof}
Since $I$ is the maximum independent set in $G[V\setminus U]$ and $G[V\setminus U]$ is $S_l$-free,
\begin{align}\label{3}
e(G[V\setminus U])\leq (l-1)(n-s-|I|)
\end{align}
Let $I_u$ be the neighborhood of vertex $u\in U$ in $G[V\setminus U]$. Then $I_{u_1}\bigcap I_{u_2}=\emptyset$ for any vertex $u_1$ in $U_1$ and any vertex $u_2$ in $U_2$. Otherwise, we can find a $K_3$ copy, a contradiction. This implies that
 $|I_{u_1}|+|I_{u_2}|\leq n-s$. Let $|I_1|=\max\limits_{u\in U_1}I_u$ and $|I_2|=\max\limits_{u\in U_2}I_u$. Then $|I_1|+|I_2|\leq n-s$. So
        \begin{align}\label{3.3}
         e(U,V\setminus U)&=\sum\limits_{u\in U_1}d_{G[V\setminus U]}(u)+\sum\limits_{v\in U_2}d_{G[V\setminus U]}(v)\nonumber\\[2mm]
         &=\sum\limits_{u\in U_1}|I_u|+\sum\limits_{v\in U_2}|I_v|\nonumber\\[2mm]
         &\leq x|I_1|+y|I_2|\nonumber\\[2mm]
         &\leq x|I_1|+y(n-s-|I_1|)\nonumber\\[2mm]
         &\leq(x-y)|I|+y(n-s).
       \end{align}
    By (\ref{3}) and (\ref{3.3}), we get
        \begin{align*}
            e(G)&\leq e(G[U])+e(U,V\setminus U)+e(G[V\setminus U])\\[2mm]
            &\leq xy+(x-y)|I|+y(n-s)+(l-1)(n-s-|I|)\\[2mm]
            &= s|I|+(l-1)(n-s-|I|)+y(n-2|I|-y).
        \end{align*}
\end{proof}
\end{claim}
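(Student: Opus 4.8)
The plan is to split the edge count into its three natural pieces
\[
e(G) = e(G[U]) + e(U, V\setminus U) + e(G[V\setminus U]),
\]
and bound each separately, the point being that the cross term $e(U,V\setminus U)$ is where the $K_3$-freeness and the partition of $U$ do the real work. Since $G[U]$ has already been identified as a complete bipartite graph with parts $U_1,U_2$ of sizes $x\geq y$, we have $e(G[U])=xy$ exactly. For the term inside $V\setminus U$, I would use that $I$ is independent, so every edge of $G[V\setminus U]$ meets the complement $(V\setminus U)\setminus I$, a set of size $n-s-|I|$; because $G[V\setminus U]$ is $S_l$-free every vertex there has degree at most $l-1$, and summing degrees over this set bounds $e(G[V\setminus U])$ by $(l-1)(n-s-|I|)$.

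The heart of the argument is the cross term. For $u\in U$ set $I_u=N_G(u)\cap(V\setminus U)$. The key observation, forced by $K_3$-freeness together with the completeness of $G[U]$ between its two parts, is that any $u_1\in U_1$ and $u_2\in U_2$ are adjacent, so a common neighbor of $u_1,u_2$ in $V\setminus U$ would close a triangle; hence $I_{u_1}\cap I_{u_2}=\emptyset$. Writing $|I_1|=\max_{u\in U_1}|I_u|$ and $|I_2|=\max_{u\in U_2}|I_u|$ and applying this disjointness to the two maximizers gives $|I_1|+|I_2|\leq n-s$. Moreover each $I_u$ is an independent set of $G[V\setminus U]$ (again by $K_3$-freeness, since $N_G(u)$ is independent), so $|I_u|\leq |I|$. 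Then
\begin{align*}
e(U, V\setminus U) &= \sum_{u \in U_1} |I_u| + \sum_{v \in U_2} |I_v| \leq x|I_1| + y|I_2| \\
&\leq x|I_1| + y(n-s-|I_1|) = (x-y)|I_1| + y(n-s),
\end{align*}
and since $x\geq y$ this last expression is nondecreasing in $|I_1|$, so replacing $|I_1|$ by its upper bound $|I|$ yields $e(U,V\setminus U)\leq (x-y)|I|+y(n-s)$.

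Finally I would add the three bounds and simplify using $s=x+y$: a short computation shows $xy+(x-y)|I|+y(n-s)=s|I|+y(n-2|I|-y)$, whence
\[
e(G)\leq s|I|+(l-1)(n-s-|I|)+y(n-2|I|-y),
\]
which is exactly the claim. I expect the main obstacle to be the cross-term step: correctly pairing the two parts of $U$ to extract the disjointness $|I_1|+|I_2|\leq n-s$, and recognizing that $(x-y)|I_1|+y(n-s)$ is monotone in $|I_1|$ \emph{precisely because} $U_1$ is the larger part ($x\geq y$), which is what licenses passing to the bound in terms of $|I|$. Note that the inequality itself does not actually require $|I|>\lfloor\frac{n-s}{2}\rfloor$; that hypothesis merely marks the regime in which this refined bound improves on the cruder estimate $e(U,V\setminus U)\leq s|I|$ used in the complementary case, which is why the two cases are split at $\lfloor\frac{n-s}{2}\rfloor$.
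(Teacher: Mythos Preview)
Your proof is correct and follows essentially the same route as the paper's: the same three-part decomposition, the same bound $(l-1)(n-s-|I|)$ on edges inside $V\setminus U$, and the same disjointness argument $|I_1|+|I_2|\le n-s$ for the cross term, followed by the identical algebraic simplification via $s=x+y$. Your write-up is in fact a bit more careful than the paper's, since you make explicit why $|I_1|\le |I|$ (each $I_u$ is independent) and why passing from $|I_1|$ to $|I|$ is licit (monotonicity from $x\ge y$), and you correctly note that the hypothesis $|I|>\lfloor (n-s)/2\rfloor$ is not used in the derivation itself.
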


\begin{claim}\label{en}
    If $|I|>\lfloor\frac{n-s}{2}\rfloor$ and $n$ is large enough, then the following statements holds.
    \begin{description}
\item[(1)] If $l\geq s+1$ and $n-s$ is even,
then
\begin{align*}
\operatorname{ex}(n,\{K_3,(s+1)S_{l}\})
\leq e(G_1(s)).
\end{align*}
\noindent
If $l\geq s+1$ and $n-s$ is odd,
then $$\operatorname{ex}(n,\{K_3,(s+1)S_{l}\})
\leq e(G_2(s)).$$
\item[(2)] If $l<s+1$,
then $$\operatorname{ex}(n,\{K_3,(s+1)S_{l}\})\leq e(K_{s,n-s}).$$
\end{description}
\begin{proof}
  Let $t=n-2|I|$. Applying  Claim \ref{eu}, we obtain
 \begin{align}\label{44}
e(G)&\leq s|I|+(l-1)(n-s-|I|)+y(t-y).
\end{align}
  Invoking the facts $|I|>\lfloor\frac{n-s}{2}\rfloor$ and $n\geq s+|I|$, we get  $-n+2s\leq t\leq s-1$.

 \noindent
 (1) For the case $l\geq s+1$, if
  $t<0$, we obtain from (\ref{44}) that
 \begin{align*}
e(G)& \leq s|I|+(l-1)(n-s-|I|)\\
&\leq (l-1)(n-s)+(s-l+1)(\lfloor\frac{n-s}{2}\rfloor+1)\\
&< e(G_2(s)).
 \end{align*}

 If $0\leq t\leq s-1$ and $n-s$ is even, by (\ref{44}), we get
 \begin{align*}
e(G)&\leq (l-1)(n-s)+(s-l+1)|I|+\lceil\frac{t}{2}\rceil\ \lfloor\frac{t}{2}\rfloor\\[2mm]
    &< \lceil\frac{s}{2}\rceil\ \lfloor\frac{s}{2}\rfloor+(l-1)(n-s)+(s-l+1)(\frac{n-s}{2}+1)\\[2mm]
    &= \lceil\frac{s}{2}\rceil\ \lfloor\frac{s}{2}\rfloor+s\frac{n-s}{2}+(l-1)\frac{n-s}{2}+(s-l+1)\\[2mm]
    &\leq \lceil\frac{s}{2}\rceil\ \lfloor\frac{s}{2}\rfloor+s \frac{n-s}{2}+\frac{(l-1)(n-s)}{2}\\[2mm]
    &=e(G_1(s)).
\end{align*}
   If $0\leq t\leq s-1$ and  $n-s$ is odd, combining with (\ref{44}), we deduce
\begin{align}\label{eq3.6}
    e(G)&\leq (l-1)(n-s)+(s-l+1)(\lfloor\frac{n-s}{2}\rfloor+1)+ \lceil\frac{s-1}{2}\rceil\ \lfloor\frac{s-1}{2}\rfloor\nonumber\\[2mm]
    &=\lceil\frac{s}{2}\rceil\ \lfloor\frac{s}{2}\rfloor+s-\lfloor\frac{s}{2}\rfloor+s\lfloor \frac{n-s}{2}\rfloor+(l-1)(n-s-\lfloor \frac{n-s}{2}\rfloor-1)\nonumber\\[2mm]
    &=\lceil\frac{s}{2}\rceil\ \lfloor\frac{s}{2}\rfloor+s\lfloor \frac{n-s}{2}\rfloor+(l-1)\lfloor \frac{n-s}{2}\rfloor+\lceil\frac{s}{2}\rceil\nonumber\\[2mm]
    &=e(G_2(s)).
\end{align}
(2) For the case $l<s+1$, if $t< 0$, it follows from  (\ref{44}) that
\begin{align}\label{3.2}
    e(G)&\leq (l-1)(n-s)+(s-l+1)|I|\notag\\
    &\leq s(n-s).
\end{align}
If $0\leq t\leq s-1$, notice that $|I|=\frac{n-t}{2}$, we obtain
\begin{align*}
    e(G)&\leq (l-1)(n-s)+(s-l+1)\frac{n-t}{2}+\lceil\frac{s}{2}\rceil\ \lfloor\frac{s}{2}\rfloor\\[2mm]
    &\leq \frac{s+l-1}{2}n+\lceil\frac{s}{2}\rceil\ \lfloor\frac{s}{2}\rfloor\\
    &< s(n-s)
\end{align*}
for sufficiently large $n$.
\end{proof}
\end{claim}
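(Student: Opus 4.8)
The plan is to treat the inequality from Claim~\ref{eu} as a function of the two free parameters $|I|$ and $y=|U_2|$ and to optimise it over the feasible region. Writing $t=n-2|I|$, the bound of Claim~\ref{eu} reads
\begin{align*}
e(G)\le (l-1)(n-s)+(s-l+1)|I|+y(t-y),
\end{align*}
so that, after grouping the terms involving $|I|$, the coefficient of $|I|$ is $s-l+1-2y$. The sign of $s-l+1$ drives the whole argument: when $l\ge s+1$ it is $\le 0$, which forces $|I|$ down to its minimum, whereas when $l<s+1$ it can be positive, forcing $|I|$ up to its maximum. First I would pin down the range of $t$: the hypothesis $|I|>\lfloor\frac{n-s}{2}\rfloor$ gives $|I|\ge\lfloor\frac{n-s}{2}\rfloor+1$, hence $t\le s-1$, while $|I|\le n-s$ gives $t\ge -n+2s$.

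Next I would split on the sign of $t$. If $t<0$ (equivalently $|I|>\frac n2$), then $y(t-y)=ty-y^2\le 0$ for every admissible $y\ge 0$, so the bound collapses to $e(G)\le(l-1)(n-s)+(s-l+1)|I|$. When $l\ge s+1$ the right-hand side is non-increasing in $|I|$, so replacing $|I|$ by the smaller quantity $\lfloor\frac{n-s}{2}\rfloor+1$ over-estimates it and yields $e(G)<e(G_2(s))$ (for $n-s$ even this also gives the claimed $e(G_1(s))$ since $e(G_1(s))=e(G_2(s))$ there); when $l<s+1$ the right-hand side is non-decreasing in $|I|$, so replacing $|I|$ by $n-s$ gives $e(G)\le (l-1)(n-s)+(s-l+1)(n-s)=s(n-s)=e(K_{s,n-s})$. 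In the complementary subcase $0\le t\le s-1$ I would use $|I|=\frac{n-t}{2}$ together with the elementary estimate $y(t-y)\le\lceil\frac t2\rceil\lfloor\frac t2\rfloor$ (valid because the optimal $y\approx t/2$ satisfies $\lfloor t/2\rfloor\le\lfloor s/2\rfloor$, so the constraint $y\le\lfloor\frac s2\rfloor$ does not bind); substituting turns the bound into an explicit expression in $t$ whose maximum over $0\le t\le s-1$ is then compared against $e(G_1(s))$, $e(G_2(s))$, or $e(K_{s,n-s})$ according to the case.

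The main obstacle is the bookkeeping in this last subcase when $l\ge s+1$, where the parity of $n-s$ selects the extremal construction. For $n-s$ even the computation should close at $e(G_1(s))$, with equality attainable so that the bound is tight rather than merely asymptotic; for $n-s$ odd an extra additive $\lceil\frac s2\rceil$ appears and the estimate must be reorganised to land exactly on $e(G_2(s))$, as in the target identity $(l-1)(n-s)+(s-l+1)(\lfloor\frac{n-s}{2}\rfloor+1)+\lceil\frac{s-1}{2}\rceil\lfloor\frac{s-1}{2}\rfloor=e(G_2(s))$. Keeping the floors, ceilings, and the factor $s-l+1$ aligned so that every inequality is sharp is the delicate part. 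Finally, the hypothesis that $n$ is large enough is genuinely needed only in the case $l<s+1$ with $0\le t\le s-1$: there the bound has the shape $\frac{s+l-1}{2}\,n+O(1)$, and since $l<s+1$ forces $\frac{s+l-1}{2}<s$, this sits strictly below $s(n-s)=e(K_{s,n-s})$ once $n$ exceeds a constant threshold, which is precisely where the largeness of $n$ is invoked.
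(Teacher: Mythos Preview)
Your proposal is correct and follows essentially the same route as the paper: set $t=n-2|I|$, rewrite Claim~\ref{eu}'s bound as $(l-1)(n-s)+(s-l+1)|I|+y(t-y)$, split on the sign of $t$, use $y(t-y)\le\lceil t/2\rceil\lfloor t/2\rfloor$ when $t\ge 0$, and then push $|I|$ to the appropriate endpoint depending on the sign of $s-l+1$. One small remark: in the subcase $l\ge s+1$, $0\le t\le s-1$, $n-s$ even, the paper's chain is actually strict ($e(G)<e(G_1(s))$), so equality is \emph{not} attained here---the extremal configuration in that regime comes from the $|I|\le\lfloor\tfrac{n-s}{2}\rfloor$ case handled in Claim~\ref{claim 5}, not from this claim.
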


 Using Claims \ref{claim 5} and  \ref{en}, we deduce
 \begin{align*}
 \operatorname{ex}(n,\{K_3,(s+1)S_{l}\})\leq \max\{e(K_{s, n-s}), e(G_1(s)),e(G_2(s))\}.
 \end{align*}
 Combining with (\ref{3.1}), we get
\begin{align*}
 \operatorname{ex}(n,\{K_3,(s+1)S_{l}\}) = \max\{e(K_{s, n-s}), e(G_1(s)),e(G_2(s))\}.
 \end{align*}
 To be more specific,

(1) If $l< s+1$ and $n$ is large enough, then
\begin{align*}
\operatorname{ex}(n,\{K_3,(s+1)S_{l}\})&= \max\{e(K_{s, n-s}), e(G_1(s)),e(G_2(s))\}\\
&=s(n-s).
\end{align*}
Furthermore, according to  the proofs of Claims  \ref{claim 5} and   \ref{en},  we deduce that $e(G)=s(n-s)$ if and only if the equality in (\ref{3.2}) holds, which implies that $|I|=n-s$ and $y=0$. Then $G\cong K_{s, n-s}$, hence  $K_{s, n-s}$ is the unique extremal graph.

(2) If $l\geq s+1$ and $n$ is large enough, then
\begin{align*}
\operatorname{ex}(n,\{K_3,(s+1)S_{l}\})&= \max\{e(K_{s, n-s}), e(G_1(s)),e(G_2(s))\}\\
&=\max\{e(G_1(s)),e(G_2(s))\}.
\end{align*}

When $n-s$ is even, then \begin{align*}
\operatorname{ex}(n,\{K_3,(s+1)S_{l}\})&=\max\{e(G_1(s)), e(G_2(s))\}\\
&=e(G_1(s))=e(G_2(s)).
\end{align*}
Furthermore, we can see from the proofs
of Claims \ref{claim 5} and  \ref{en} that  $e(G)=e(G_1(s))$ if and only if the equality in (\ref{eq3.2}) holds, which implies that $G$ is a graph in $\mathcal{G}_1(s)$.

When $n-s$ is odd, then \begin{align*}
\operatorname{ex}(n,\{K_3,(s+1)S_{l}\})&=\max\{e(G_1(s)), e(G_2(s))\}.
\end{align*}
Furthermore, it follows from   the proofs of Claims \ref{claim 5} and  \ref{en}  that  $e(G) = \max\{e(G_1(s)), e(G_2(s))\}$ if and only if the equality in (\ref{eq3.2}) or the equality in (\ref{eq3.6}) holds,  which implies that $G$ is a graph in $\mathcal{G}_1(s)$ or a graph in $\mathcal{G}_2(s)$.
The proof is finished. \qed

\section*{Declaration of interests}
The authors declare that there is no conflict of interest.


\begin{thebibliography}{4}
      \expandafter\ifx\csname natexlab\endcsname\relax\def\natexlab#1{#1}\fi
      \providecommand{\bibinfo}[2]{#2}
      \ifx\xfnm\relax \def\xfnm[#1]{\unskip,\space#1}\fi
      \bibitem[1]{2022-Alon-arXiv:2210.15076}
      \bibinfo{author}{N. Alon}, \bibinfo{author}{P. Frankl},
      \newblock \bibinfo{title}{Tur{\'a}n graphs with bounded matching number},
      \newblock \bibinfo{journal}{Journal of Combinatorial Theory, Series B},
      \bibinfo{volume}{165} (\bibinfo{year}{2024}) \bibinfo{pages}{223-229}.



      \bibitem[2]{1959-Erdos-Gallai}
      \bibinfo{author}{P. Erd\H{o}s},  \bibinfo{author}{T. Gallai},
      \newblock \bibinfo{title}{On maximal paths and circuits of graphs},
      \newblock \bibinfo{journal}{Acta Mathematica Academiae Scientiarum Hungaricae}, \bibinfo{volume}{10} (\bibinfo{year}{1959}) \bibinfo{pages}{337-356}.
      \bibitem[3]{2022-Gerbner-arXiv:2211.03272}
      \bibinfo{author}{D. Gerbner},
      \newblock \bibinfo{title}{On  Tur{\'a}n problems with bounded matching number},
      \newblock \bibinfo{journal}{Journal of Graph Theory}, (\bibinfo{year}{2023}) \bibinfo{pages}{1-7}.
      \bibitem[4]{2023Katona}
      \bibinfo{author}{G.O.H. Katona}, \bibinfo{author}{C. Xiao},
      \newblock \bibinfo{title}{Extremal graphs without long paths and large cliques},
      \newblock \bibinfo{journal}{European Journal of Combinatorials}, (\bibinfo{year}{2023}) \bibinfo{pages}{103807}.
      \bibitem[5]{2022-Khormali-p103506}
      \bibinfo{author}{O. Khormali}, \bibinfo{author}{C. Palmer},
      \newblock \bibinfo{title}{Tur{\'a}n numbers for hypergraph star forests},
      \newblock \bibinfo{journal}{European Journal of Combinatorics},
      \bibinfo{volume}{102} (\bibinfo{year}{2022}) \bibinfo{pages}{103506}.
      \bibitem[6]{2024Liu}
      \bibinfo{author}{Y. Liu}, \bibinfo{author}{L. Kang},
      \newblock \bibinfo{title}{Extremal graphs without long paths and a given graph},
      \newblock \bibinfo{journal}{Discrete Mathematics},
      \bibinfo{volume}{347} (\bibinfo{year}{2024}) \bibinfo{pages}{113988}.



      \bibitem[7]{2019-Lan}
      \bibinfo{author}{Y.  Lan}, \bibinfo{author}{T. Li}, \bibinfo{author}{Y. Shi}, \bibinfo{author}{J. Tu},
      \newblock \bibinfo{title}{The Tur{\'a}n number of star forests},
      \newblock \bibinfo{journal}{Applied Mathematics and Computation},
      \bibinfo{volume}{348} (\bibinfo{year}{2019}) \bibinfo{pages}{270-274}.
      \bibitem[8]{2022-Li-p112653}
      \bibinfo{author}{S.  Li}, \bibinfo{author}{J.  Yin},
      \bibinfo{author}{J.  Li},
      \newblock \bibinfo{title}{The Tur{\'a}n number of $ k {S}_\ell $},
      \newblock \bibinfo{journal}{Discrete Mathematics},
      \bibinfo{volume}{345} (\bibinfo{year}{2022}) \bibinfo{pages}{112653}.
      \bibitem[9]{2013-Liu-p62}
       \bibinfo{author}{B. Lidick\'{y}}, \bibinfo{author}{H. Liu},
      \bibinfo{author}{C. Palmer},
      \newblock \bibinfo{title}{On the Tur{\'a}n Number of Forests},
      \newblock \bibinfo{journal}{The Electronic Journal of Combinatorics},
      \bibinfo{volume}{20} (\bibinfo{year}{2013}) \bibinfo{pages}{62}.
   \end{thebibliography}
   \end{document}